\numberwithin{equation}{section}
\newtheorem{thm}{Theorem}[section]
\newtheorem{lem}[thm]{Lemma}
\newtheorem{prp}[thm]{Proposition}
\newtheorem{cor}[thm]{Corollary}
\newcommand{\R}{\mathbb R}
\newcommand{\p}{\mathbb P}
\newcommand{\E}{\mathbb E}
\newcommand{\Z}{\mathbb Z}
\newcommand{\dfn}{\stackrel{\mathrm{def}}{ = }}
\newcommand{\LCD}{{\mathrm{LCD}}}
\newcommand{\card}{{\mathrm{card}}}
\begin{document}

\title{Small ball estimates for quasi-norms}

\author{Omer Friedland}
\address{Institut de Math\'ematiques de Jussieu, Universit\'e Pierre et Marie Curie \\ 4 Place Jussieu, 75005 Paris, France}
\email{omer.friedland@imj-prg.fr}

\author{Ohad Giladi}
\address{School of Mathematical and Physical Sciences, University of Newcastle \\  Callaghan, NSW 2308, Australia}
\email{ohad.giladi@newcastle.edu.au}

\author{Olivier Gu\'edon}
\address{Laboratoire d'Analyse et Math\'ematiques Appliqu\'ees, Universit\'e Paris-Est \\ 77454 Marne-la-Vall\'ee, France}
\email{olivier.guedon@u-pem.fr}

\thanks{Part of this work was done while the second named author was visiting the University of Alberta as a PIMS postdoctoral fellow. The authors would also like to thank the referees for their valuable comments.}

\date{\today}

\subjclass[2010]{60D05}

\maketitle

\begin{abstract}
This note contains two types of small ball estimates for random vectors in finite dimensional spaces equipped with a quasi-norm. In the first part, we obtain bounds for the small ball probability of random vectors under some smoothness assumptions on their density function. In the second part, we obtain Littlewood-Offord type estimates for quasi-norms. This generalizes results which were previously obtained in \cite{FS07, RV09}. 
\end{abstract}


\section{Introduction}

Let $E = \big(\R^n, \|\cdot\|\big)$ be an $n$-dimensional space equipped with a quasi-norm $\|\cdot\|$, and let $X$ be a random vector in $E$. The present note is concerned with small ball estimates of $X$, i.e., estimates of the form
\begin{align} \label{small ball def}
\mathbb P\big(\|X\| \le t\big) \le \varphi(t),
\end{align}
where $\varphi(t) \to 0$ as $t\to 0$. 

\smallskip

Estimates of the form \eqref{small ball def} have been studied under different assumptions on $E$ and $X$. One direction is the case when $E = \ell_2^n$, i.e., when $\|\cdot\| = |\cdot|_2$ is the Euclidean norm, and $X$ is assumed to be log-concave or, more generally, $\kappa$-concave. Recall that a log-concave vector is a vector that satisfies that for every $A,B\subseteq\R^n$ and every $\lambda \in [0,1]$, 
\begin{align*}
\mathbb P\big(X\in \lambda A + (1-\lambda) B\big) \ge \mathbb P\big(X\in A\big)^{\lambda}\cdot\mathbb P\big(X\in B\big)^{1-\lambda}. 
\end{align*}
For such vectors it was shown in \cite{Pao12} that
\begin{align*}
\p\big(|X|_2 \le \sqrt n t \big) \le \big(Ct\big)^{C'\sqrt n},
\end{align*}
and this result was later generalized in \cite{AGLLOPT12} to $\kappa$-concave vectors. 

\smallskip

Another direction which has been studied is the case when $X$ is a Gaussian vector, and $\|\cdot\|$ is a \emph{general} norm. For example, in \cite{LO05} it was shown that if $X$ is a centered Gaussian vector and $\|\cdot\|$ is a norm on $\mathbb R^n$ with unit ball $K$ such that its $n$-dimensional Gaussian measure, denoted $\gamma_n(K)$, is less than $1/2$, then
\begin{align*}
\mathbb P\big(\|X\| \le t \big) \le \big(2t\big)^{\frac{\omega^2}{4}}\gamma_n(K),
\end{align*}
where $\omega$ is the inradius of $K$. See also \cite{LS01} for an earlier survey of the subject. 

\smallskip

Finally, let us mention that small ball estimates play a r\^ole in other problems, such as invertibility of random matrices and convex geometry. See e.g. \cite{GM04, RV09, PP13}. 

\smallskip

While the above results have a more geometric flavor, in the present note we will try to present a more analytic approach. For a random vector $X$, let $\phi_X$ be its characteristic function ,i.e.,
\begin{align*}
\phi_{X}(\xi) = \mathbb E\exp\big(i\langle \xi,X\rangle\big). 
\end{align*} 

Recall the following result:
\begin{thm} \cite[Theorem 3.1]{FG11} \label{thm fourier}
Assume that $\|\cdot\|$ is a quasi-norm on $\mathbb R^n$ with unit ball $K$. Then for every $t>0$,
\begin{align} \label{not smooth}
\mathbb P\big(\|X\| \le t\big) \le |K|\left(\frac{t}{2\pi}\right)^{n}\int_{\mathbb R^n}\big|\phi_X(\xi)\big|d\xi. 
\end{align}
\end{thm}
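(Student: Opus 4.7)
The plan is to prove this via a direct Fourier-inversion identity for $\mathbb{P}(\|X\|\le t)$, combined with the trivial pointwise bound $\bigl|\widehat{\mathbf{1}_{tK}}(\xi)\bigr|\le |tK| = t^n|K|$ for the Fourier transform of the indicator of $tK$.

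First, I would reduce to the case $\int_{\mathbb{R}^n}|\phi_X(\xi)|\,d\xi < \infty$, since otherwise the stated inequality holds vacuously. Under this integrability assumption, classical Fourier inversion provides a bounded continuous density for $X$, given by
\begin{align*}
f(x) = \frac{1}{(2\pi)^n}\int_{\mathbb{R}^n}\phi_X(\xi)\,e^{-i\langle \xi,x\rangle}\,d\xi.
\end{align*}

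Second, I would write $\mathbb{P}(\|X\|\le t) = \int_{tK} f(x)\,dx$, substitute the inversion formula for $f$, and exchange the order of integration. Fubini is justified because the integrand is dominated in absolute value by $|\phi_X(\xi)|\mathbf{1}_{tK}(x)$, whose integral over $\mathbb{R}^n\times\mathbb{R}^n$ equals $|tK|\cdot\int|\phi_X|<\infty$. This yields the identity
\begin{align*}
\mathbb{P}(\|X\|\le t) = \frac{1}{(2\pi)^n}\int_{\mathbb{R}^n}\phi_X(\xi)\left(\int_{tK}e^{-i\langle \xi,x\rangle}\,dx\right)d\xi.
\end{align*}
Taking absolute values inside the outer integral and invoking the trivial pointwise estimate $\bigl|\int_{tK}e^{-i\langle \xi,x\rangle}\,dx\bigr|\le |tK| = t^n|K|$ immediately gives the claimed bound.

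There is no real obstacle in this argument. The non-convexity of the quasi-norm ball $K$ plays no role, because we only use its volume $|K|$ and a pointwise Fourier-size estimate, both of which are insensitive to convexity. The only minor technical points are the measurability and finiteness of $|K|$ (which follow from standard continuity properties of a quasi-norm, guaranteeing that $K$ is closed and bounded), and the application of Fubini, both of which are routine.
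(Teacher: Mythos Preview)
Your argument is correct. It is the standard Fourier-inversion proof: reduce to $\phi_X\in L^1$, recover a bounded density by inversion, integrate over $tK$, apply Fubini, and then use the trivial bound $\bigl|\widehat{\mathbf 1_{tK}}(\xi)\bigr|\le |tK|$. Each step is justified exactly as you describe, and nothing about the quasi-norm beyond the finiteness of $|K|$ is used.

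Note, however, that the paper does not supply its own proof of this statement: Theorem~\ref{thm fourier} is quoted from \cite[Theorem~3.1]{FG11} and used as a black box. So there is no proof in the present paper to compare against. Your write-up is precisely the argument one expects (and is essentially the one in the cited reference), so there is nothing to add or contrast.
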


Theorem \ref{thm fourier} says that one can obtain small ball estimates by estimating the $L_1$ norm of the characteristic function of the random vector. Moreover, one can consider a ``smoothed" version of Theorem \ref{thm fourier}: consider instead of $X$ the random vector $X + tG$, where $G$ is a standard Gaussian vector in $\mathbb R^n$ which is independent of $X$. Since $\|\cdot \|$ is a quasi-norm on $\mathbb R^n$, there exists a constant $C_K>0$ such that 
\begin{align} \label{const quasi}
\|x + y\| \le C_K(\|x\| + \|y\|), ~~ x,y \in \mathbb R^n. 
\end{align}

Therefore,
\begin{align*}
\mathbb P\big(\|X + tG\| \le 2C_Kt\big) & \ge \mathbb P\big(\|X\| \le t \wedge \|G\| \le 1\big) 
\\ & = \mathbb P\big(\|X\| \le t\big)\cdot \mathbb P\big(\|G\| \le 1\big) 
\\ & = \mathbb P\big(\|X\| \le t\big)\cdot \gamma_n(K),
\end{align*} 
where $\gamma_n(\cdot)$ is the $n$-dimensional Gaussian measure. Thus, Theorem \ref{thm fourier} implies 
\begin{align*}
\mathbb P\big(\|X\|\le t\big) \le \frac{\mathbb P\big(\|X + tG\| \le 2C_Kt\big)}{\gamma_n(K)} \le \frac{|K|}{\gamma_n(K)}\left(\frac{C_Kt}{\pi}\right)^n\int_{\mathbb R^n}\big|\phi_{X + tG}(\xi)\big|d\xi. 
\end{align*}
Using the independence of $X$ and $G$,
\begin{align} \label{small ball smoothed}
\nonumber \mathbb P\big(\|X\|\le t\big) & \le \frac{|K|}{\gamma_n(K)}\left(\frac{C_Kt}{\pi}\right)^n\int_{\mathbb R^n}\big|\phi_{X}(\xi)\big|\phi_{tG}(\xi)d\xi 
\\ & = \frac{|K|}{\gamma_n(K)}\left(C'_Kt\right)^n\int_{\mathbb R^n}\big|\phi_{X}(\xi)\big|e^{-\frac{t^2|\xi |_2^2}{2}}d\xi. 
\end{align}
Inequality \eqref{small ball smoothed} enables one to obtain small ball estimates in cases where \eqref{not smooth} cannot be applied. We use it for two different sets of examples. In the first set of examples we consider continuous random vector under certain assumptions on their characteristic functions (which are nothing but the Fourier transform of their density functions). This is discussed in Section \ref{sec cont}, Theorem \ref{thm decay}. In the second set of examples, we consider random vectors $X$ of the form
\begin{align*}
X = \sum_{i = 1}^N\alpha_i a_i,
\end{align*}
where the $a_i$ are fixed vectors in $\mathbb R^n$ and the $\alpha_i$'s are i.i.d. random variables that satisfy a certain anti-concentration condition. This problem and its applications have been studied by many authors, first in the one dimensional case (i.e., when $n=1$) and later in the multidimensional case. See \cite{FS07, RV08, RV09, TV092, TV091, TV12, Ngu121, NV13} and the reference therein for more information on this subject. In the case $E = \ell_2^n$, the problem of finding a small ball estimate have been previously considered in \cite{FS07, RV09} and is called a Littlewood-Offord type estimate. Here such an estimate is obtained for a general quasi-norm. This is discussed in Section \ref{sec lo}, Theorem \ref{thm lo}. We recall that in \cite{RV09}, Littlewood-Offord estimates were used to estimate the smallest singular value of a rectangular matrix, where the smallest singular value of a matrix $A$ is defined as $\inf_{|x|_2 =1}|Ax|_2$. It would be interesting to try and use Theorem \ref{thm lo} to estimate $\inf_{\|x\|=1}\|Ax\|$ where now $\|\cdot\|$ is a general norm, or even quasi-norm. See e.g. \cite{LL15} for some recent results in this direction.

\smallskip

\noindent{\bf Notation.} In this note $C$, $C'$, etc. always denote absolute constants. $\|\cdot\|$ denotes a quasi-norm with unit ball $K$. $|\cdot|_2$ denotes the Euclidean norm on $\R^n$. $B(x,r)$ denotes the closed ball around $x$ with radius $r$ with respect to the Euclidean norm. $\gamma_n(\cdot)$ denotes the $n$-dimensional Gaussian measure.

\section{Small ball estimates for continuous random vectors} \label{sec cont}

In this section we consider continuous random vectors, i.e., vectors with density function $f_X$. For such vectors we have 
\begin{align*}
\phi_X(\xi) = \mathbb E\exp\big(i\langle \xi, X\rangle \big) = \int_{\mathbb R^n}e^{i\langle \xi, x\rangle}f_X(x)dx = \hat f(\xi). 
\end{align*}
We can rewrite \eqref{small ball smoothed} in the following way:
\begin{align*}
\p\big(\|X\| \le t \big) \le \frac{|K|}{\gamma_n(K)}\left(C_K't\right)^n\int_{\R^n}\big|\hat f_X(\xi)\big|e^{-\frac{t^2|\xi|_2^2}{2}}d\xi. 
\end{align*}
This suggests that small ball estimates are related to weighted norms of $\hat f_X$ which are in turn known to be related to smoothness properties of $f_X$. We will study small ball estimates in terms of Sobolev norms.

\subsection{Small ball estimates and Sobolev norm}

Recall the definition of Sobolev norm: if $\mathcal F$ is the Fourier transform on $\R^n$, then 
\begin{align} \label{def sobolev}
\|f\|_{\beta,p} = \|f\|_{H_{\beta,p}(\mathbb R^n)} = \left\|\mathcal F^{-1} \left( \left(1 + |\xi|^2 \right)^{\beta/2}\hat f \right)\right\|_{L_p \left(\R^n \right)},
\end{align} 
where $p\in (1,\infty)$ and $\beta>0$.

\begin{thm} \label{thm decay}
Assume that $X$ is a random vector in $\R^n$. Assume that $1 < p\le 2$. Then for every quasi-norm $\|\cdot\|$ on $\R^n$ with unit ball $K$, 
\begin{align} \label{small ball sobolev}
\p \left(\|X\| \le t \right) \le C_K'^n\frac{|K|}{\gamma_n(K)} \|f_X\|_{\beta, p}\cdot M(\beta,p,n,t) .
\end{align}
If $pt^2\le 2$, then
\begin{align*}
M(\beta,p,n,t) \le
\begin{cases}
2^{\frac n{2p}-\frac{\beta}{2}}|\mathbb S^{n-1}|^{1/p}\Gamma\left(\frac{n-\beta p}{2}\right)^{1/p}p^{\frac{\beta}{2}-\frac{n}{2p}}\cdot t^{\beta + \frac{n}{p'}} & 2 < n-\beta p,
\\ 
2^{\frac n{2p}-\frac{\beta}{2}}|\mathbb S^{n-1}|^{1/p}\left(\log\left(\frac {2e}{pt^2}\right)\right)^{1/p}p^{\frac{\beta}{2}-\frac{n}{2p}}\cdot t^{\beta + \frac{n}{p'}} & 0 < n-\beta p \le 2,
\\
|\mathbb S^{n-1}|^{1/p}\left(\log\left(\frac {2e}{pt^2}\right)\right)^{1/p}t^n & n- \beta p \le 0,
\end{cases}
\end{align*}
where $p' = p/(p-1)$. Otherwise, if $pt^2\ge 2$, then
\begin{align*}
M(\beta,p,n,t) \le
\begin{cases}
|\mathbb S^{n-1}|^{1/p}\left(2e^{-\frac{pt^2}{18}} + \left(\frac{2}{pt^2}\right)^{\frac{n-\beta p}{2}}\Gamma\left(\frac{n-\beta p}{2}\right)\right)^{1/p}t^n & 2 \le pt^2 \le n-\beta p,
\\
3^{1/p}|\mathbb S^{n-1}|^{1/p}e^{-\frac{t^2}{18}}t^n & n-\beta p \le pt^2 \le n,
\\
|\mathbb S^{n-1}|^{1/p}\left(\frac {2n} {pt^2}\log\left(\frac{ept^2}{n}\right)\right)^{\frac{n}{2p}}t^n & n \le pt^2. 
\end{cases}
\end{align*}
\end{thm}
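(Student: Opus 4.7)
The plan is to combine the smoothed small-ball inequality \eqref{small ball smoothed}, namely
\begin{align*}
\p(\|X\|\le t) \le \frac{|K|}{\gamma_n(K)}(C_K't)^n \int_{\R^n}\bigl|\hat f_X(\xi)\bigr|e^{-t^2|\xi|_2^2/2}d\xi,
\end{align*}
with H\"older's inequality and Hausdorff--Young to introduce the Sobolev norm, and then to carry out a case-by-case analysis of a single one-dimensional weight integral.

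To produce the Sobolev norm, I would factor $|\hat f_X(\xi)| = (1+|\xi|_2^2)^{\beta/2}|\hat f_X(\xi)|\cdot(1+|\xi|_2^2)^{-\beta/2}$ and apply H\"older with conjugate exponents $p'$ and $p$, obtaining
\begin{align*}
\int_{\R^n}|\hat f_X(\xi)|e^{-t^2|\xi|_2^2/2}d\xi \le \bigl\|(1+|\xi|_2^2)^{\beta/2}\hat f_X\bigr\|_{p'}\cdot I(p,\beta,n,t)^{1/p},
\end{align*}
where
\begin{align*}
I(p,\beta,n,t) := \int_{\R^n}(1+|\xi|_2^2)^{-\beta p/2}e^{-pt^2|\xi|_2^2/2}d\xi.
\end{align*}
Since $1<p\le 2$, Hausdorff--Young applied to $g := \mathcal F^{-1}\bigl((1+|\xi|_2^2)^{\beta/2}\hat f_X\bigr)$, which by \eqref{def sobolev} satisfies $\|g\|_p = \|f_X\|_{\beta,p}$, yields $\|(1+|\xi|_2^2)^{\beta/2}\hat f_X\|_{p'} \le C^n\|f_X\|_{\beta,p}$; the dimensional constant is absorbed into $(C_K')^n$. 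This reduces the theorem to showing that $M(\beta,p,n,t) := t^n I(p,\beta,n,t)^{1/p}$ satisfies the stated case-by-case bounds.

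The remaining work is the estimation of $I$. In polar coordinates,
\begin{align*}
I(p,\beta,n,t) = |\mathbb S^{n-1}|\int_0^\infty \frac{r^{n-1}\,e^{-pt^2 r^2/2}}{(1+r^2)^{\beta p/2}}\,dr,
\end{align*}
and the behavior of this one-dimensional integral is governed by the interplay between the polynomial tail $r^{n-1-\beta p}$ and the Gaussian. In the small-$t$ regime $pt^2\le 2$ the Gaussian is weak, and the case split is determined by integrability of the polynomial alone: when $n-\beta p>2$ one bounds $(1+r^2)^{-\beta p/2}\le r^{-\beta p}$ and the substitution $u=pt^2 r^2/2$ produces the $\Gamma$-factor; in the borderline range $0<n-\beta p\le 2$ one splits at $r=1$, which produces the logarithmic factor $\log(2e/(pt^2))$ from a near-critical Gaussian moment of order $\approx -1$; and the case $n-\beta p\le 0$ is treated analogously using $(1+r^2)^{-\beta p/2}\le (1+r^2)^{-n/2}$, with the same logarithmic behavior. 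In the large-$t$ regime $pt^2\ge 2$ the integrand concentrates near its unique interior maximum at $r_\ast^2\approx(n-\beta p)/(pt^2)$, and the three sub-cases correspond to $r_\ast\ge 1$ (handled by the same $\Gamma$-substitution plus an exponential tail), the peak forced near $r\approx 1$ (giving the $e^{-t^2/18}$ factor), and the regime $pt^2\ge n$ where a Laplace estimate around the correctly located peak yields the $\bigl(\frac{2n}{pt^2}\log(ept^2/n)\bigr)^{n/(2p)}$ bound.

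The technical heart of the argument, and the principal obstacle, will be this case-by-case analysis of $I$: identifying the right splitting radii and keeping track of explicit constants in the borderline (logarithmic) and Laplace regimes, in particular to verify that the numerical factors $2^{n/(2p)-\beta/2}$, $3^{1/p}$, and the exact shape $\log(ept^2/n)$ come out as stated. The preliminary smoothing step together with H\"older and Hausdorff--Young are essentially forced by the shape of the target estimate \eqref{small ball sobolev} and involve only standard manipulations.
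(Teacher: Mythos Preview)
Your proposal is correct and follows essentially the same route as the paper: Hölder's inequality to split off the Bessel-weighted Fourier factor, Hausdorff--Young to pass to $\|f_X\|_{\beta,p}$, and then a polar-coordinate case analysis of the remaining weighted Gaussian integral (which the paper isolates as Lemma~\ref{lem lp norm}). One minor simplification: with the standard normalization the Hausdorff--Young constant is $1$, so no extra $C^n$ needs to be absorbed at that step.
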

The main tool in the proof of Theorem \ref{thm decay} is the following lemma. 

\begin{lem} \label{lem lp norm}
Let $p \in [1,2]$. If $pt^2 \le 2$ then
\begin{align*}
\frac{\big\| \left(1 + |\xi|^2 \right)^{-\frac{\beta}{2}}e^{-\frac{t^2|\xi|^2}{2}}\big\|_{L_p \left(\R^n \right)}^p }{|\mathbb S^{n-1}|}
\le
\begin{cases}
\Gamma \left(\frac{n-\beta p}{2} \right)\left(\frac{2}{pt^2}\right)^{\frac{n-\beta p}{2}} & \beta p <n-2 \\ 
\log \left(\frac{2e}{pt^2}\right)\left(\frac{2}{pt^2}\right)^{\frac{n-\beta p}{2}}& n-2\le \beta p <n \\
\log \left(\frac{2e}{pt^2}\right)& \beta p \ge n.
\end{cases}
\end{align*}
Otherwise, if $pt^2 \ge 2$ then
\begin{align*}
\frac{\big\| \left(1 + |\xi|^2 \right)^{-\frac{\beta}{2}}e^{-\frac{t^2|\xi|^2}{2}}\big\|_{L_p \left(\R^n \right)}^p }{|\mathbb S^{n-1}|}
\le
\begin{cases}
2e^{-\frac{pt^2}{18}} + \left(\frac{2}{pt^2}\right)^{\frac{n-\beta p}{2}}\Gamma\left(\frac{n-\beta p}{2}\right) & 2 \le pt^2 \le n-\beta p,
\\
3e^{-\frac{pt^2}{18}}& n-\beta p \le pt^2 \le n,
\\
\left(\frac {2n} {pt^2}\log\left(\frac{ept^2}{n}\right)\right)^{n/2} & n \le pt^2. 
\end{cases}
\end{align*}
\end{lem}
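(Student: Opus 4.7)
Pass to polar coordinates: the integrand depends only on $|\xi|$, so
\begin{equation*}
\big\|(1+|\xi|^2)^{-\beta/2}e^{-t^2|\xi|^2/2}\big\|_{L_p(\R^n)}^p = |\mathbb S^{n-1}|\int_0^\infty r^{n-1}(1+r^2)^{-\beta p/2}e^{-pt^2 r^2/2}\,dr,
\end{equation*}
and the whole lemma reduces to bounding this one-dimensional radial integral, call it $I$. The only non-trivial bound I use on the rational weight is the elementary $(1+r^2)^{-\beta p/2} \le \min(1,r^{-\beta p})$, immediate from $1+r^2\ge \max(1,r^2)$. Combined with the change of variable $u = pt^2 r^2/2$ this converts every piece of $I$ into an incomplete Gamma integral with threshold $u_0 = pt^2/2$, and the six sub-cases of the lemma just reflect different elementary estimates on $\gamma(a,u_0)$ and $\Gamma(a,u_0)$. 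The master decomposition is $I = \int_0^1 + \int_1^\infty$: on the outer piece I apply $(1+r^2)^{-\beta p/2}\le r^{-\beta p}$ and rescale, so the outer piece is always a multiple of an upper incomplete Gamma $\tfrac12(2/(pt^2))^{(n-\beta p)/2}\Gamma((n-\beta p)/2,\,pt^2/2)$ with $a=(n-\beta p)/2$ and $u_0 = pt^2/2$.

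In the regime $pt^2 \le 2$ (i.e.\ $u_0\le 1$), I bound $\Gamma(a,u_0) \le \Gamma(a)$ for $n-\beta p>2$ and obtain the clean $\Gamma((n-\beta p)/2)(2/(pt^2))^{(n-\beta p)/2}$ bound, absorbing the trivial inner contribution $\int_0^1 r^{n-1}\,dr = 1/n$ into it since the Gamma prefactor is $\ge 1$. In the two logarithmic sub-cases the argument $a$ is too close to $0$ for $\Gamma(a)$ to be usable. For $n-2\le\beta p<n$ I would split at $R=\sqrt{2/(pt^2)}$ and use $\int_1^R r^{n-1-\beta p}\,dr \le R^{n-\beta p}\log R$ together with the small-$a$ tail bound $\Gamma(a,1)\le 1/e$ on $[R,\infty)$. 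For $\beta p\ge n$ I replace $\beta p$ by $n$ in the weight, put $s=r^2$, and reduce $\int_1^\infty s^{-1}e^{-pt^2 s/2}\,ds$ to the exponential integral $E_1(pt^2/2)\le \log(2e/(pt^2))$.

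In the regime $pt^2 \ge 2$ both behaviors of the incomplete Gamma appear. When $pt^2\ge n-\beta p$ the threshold $u_0$ exceeds $a$, and the tail bound $\Gamma(a,u_0)\le u_0^a e^{-u_0}$ yields $\tfrac12 e^{-pt^2/2}$ on the outer piece; when $pt^2\le n-\beta p$ the full-Gamma bound $\Gamma(a,u_0)\le \Gamma(a)$ still applies and produces the Gamma term of sub-case $2\le pt^2\le n-\beta p$. On the inner piece I use $(1+r^2)^{-\beta p/2}\le 1$ and split further at $r=1/3$: on $[0,1/3]$ the integrand is at most $r^{n-1}$, contributing $3^{-n}/n$, and on $[1/3,1]$ the Gaussian is bounded by $e^{-pt^2/18}$. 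Under the standing constraint $pt^2\le n$ of sub-case $n-\beta p\le pt^2\le n$, the numerical inequality $\log 3>1/18$ gives $3^{-n}\le e^{-n/18}\le e^{-pt^2/18}$, so all three contributions combine to yield $3 e^{-pt^2/18}$. The last sub-case $pt^2\ge n$ follows from $(1+r^2)^{-\beta p/2}\le 1$, Stirling $\Gamma(n/2) \le \sqrt{4\pi/n}(n/(2e))^{n/2}$, and $2e \log(ept^2/n) \ge 1$ for $pt^2\ge n$. The delicate step throughout will be the bookkeeping in these intermediate sub-cases: the fixed split $r=1/3$ is what creates the factor $e^{-pt^2/18}$, and one must verify that under $pt^2\le n$ and the numerical identity $\log 3>1/18$ the small-$r$ contributions are indeed absorbed by $e^{-pt^2/18}$ while remaining compatible with the full-Gamma bound produced on the outer piece.
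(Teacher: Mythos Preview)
Your plan is correct and follows the same skeleton as the paper: polar coordinates, the pointwise bound $(1+r^2)^{-\beta p/2}\le\min(1,r^{-\beta p})$, the split $I=\int_0^1+\int_1^\infty$, and the change of variable that turns the outer piece into $\tfrac12(2/(pt^2))^{(n-\beta p)/2}\Gamma\bigl((n-\beta p)/2,\,pt^2/2\bigr)$. Your Case~1 outline matches the paper's argument essentially step by step.

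In Case~2 your treatment diverges from the paper in two places, both simplifications. First, for the inner piece $\int_0^1 r^{n-1}e^{-pt^2r^2/2}\,dr$ when $pt^2\le n$, the paper splits at the data-dependent point $r_0=e^{-pt^2/n}$; your fixed split at $r=1/3$ is cleaner and, together with $\log 3>1/18$ and $pt^2\le n$, indeed gives $\le 2e^{-pt^2/18}$. Second, for the outer tail when $pt^2\ge n-\beta p$, the paper proves and invokes a separate proposition yielding $\Gamma(\alpha,x)\le 2^{\alpha+1}x^\alpha e^{-x}/\alpha$ for $x\ge\alpha\ge1$, which leaves an extra $2^{(n-\beta p)/2}$ that must then be absorbed; your sharper bound $\Gamma(a,u_0)\le u_0^a e^{-u_0}$ dispenses with that. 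This bound is correct under your hypotheses ($u_0=pt^2/2\ge1$ and $u_0\ge a$): writing $\Gamma(a,x)/(x^a e^{-x})=\int_0^\infty(1+u)^{a-1}e^{-xu}\,du$, use $(1+u)^{a-1}\le1$ for $a\le1$ and $(1+u)^{a-1}\le e^{(a-1)u}$ for $a\ge1$ to get $\le 1/x$ and $\le 1/(x-a+1)$ respectively, both $\le1$. You should include this justification, since the inequality is not entirely standard. Your handling of the sub-case $pt^2\ge n$ via the full Gamma integral and Stirling also works and avoids the paper's split at $r_0=\sqrt{(n/(pt^2))\log(pt^2/n)}$; just be careful to use a genuine Stirling \emph{upper} bound for $\Gamma(n/2)$ (a factor $e^{1/(6n)}$ suffices and is harmlessly absorbed).
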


As part of the proof of Lemma \ref{lem lp norm}, we need the following. 

\begin{prp} \label{prop x alpha}
Assume that $x\ge \alpha \ge 1$. Then
\begin{align*}
\int_x^{\infty}r^{\alpha-1}e^{-r}dr \le \frac{2^{\alpha + 1}x^{\alpha}e^{-x}}{\alpha}. 
\end{align*}
\end{prp}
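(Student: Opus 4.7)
My plan is to normalize the integration variable by substituting $r = x(1+s)$, which peels off the factor $x^\alpha e^{-x}$ that already appears on the right-hand side of the target bound. After this change of variables one obtains
\[
\int_x^\infty r^{\alpha-1} e^{-r}\, dr = x^\alpha e^{-x} \int_0^\infty (1+s)^{\alpha-1} e^{-xs}\, ds,
\]
so the problem reduces to bounding the new $s$-integral by $2^{\alpha+1}/\alpha$.

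For that remaining bound I would exploit $\alpha - 1 \ge 0$ together with the elementary inequality $1+s \le e^s$ (valid for $s \ge 0$), raised to the power $\alpha-1$, to get $(1+s)^{\alpha-1} \le e^{(\alpha-1)s}$. Substituting this in gives
\[
\int_0^\infty (1+s)^{\alpha-1} e^{-xs}\, ds \le \int_0^\infty e^{-(x-\alpha+1)s}\, ds = \frac{1}{x-\alpha+1},
\]
and the hypothesis $x \ge \alpha$ forces $x - \alpha + 1 \ge 1$, so this expression is at most $1$.

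The final step is the scalar comparison $1 \le 2^{\alpha+1}/\alpha$ for $\alpha \ge 1$, which is immediate: the map $\alpha \mapsto 2^{\alpha+1}/\alpha$ equals $4$ at $\alpha = 1$, has a unique critical point at $\alpha = 1/\ln 2$ with value $2e\ln 2 > 3$, so it stays comfortably above $1$ throughout $[1,\infty)$. Nothing in this outline presents a real obstacle; every step is a one-line estimate. In fact the argument delivers the sharper inequality $\int_x^\infty r^{\alpha-1} e^{-r}\, dr \le x^\alpha e^{-x}$, and the weaker form stated in the proposition is presumably chosen because the constant $2^{\alpha+1}/\alpha$ is exactly what is needed to absorb the other factors cleanly when the estimate is invoked inside the proof of Lemma \ref{lem lp norm}.
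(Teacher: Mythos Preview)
Your proof is correct and in fact yields the sharper bound $\int_x^\infty r^{\alpha-1}e^{-r}\,dr \le x^\alpha e^{-x}$, from which the proposition follows by the trivial inequality $1 \le 2^{\alpha+1}/\alpha$ on $[1,\infty)$. The approach, however, is genuinely different from the paper's. The paper substitutes $r = u + x$, splits the resulting integral at $u = x$, bounds the piece $\int_0^x$ by dropping the exponential, and bounds the piece $\int_x^\infty$ using $u+x \le 2u$ so that the original integral reappears on the right with a factor $2^{\alpha-1}e^{-x}$. Because $x \ge \alpha \ge 1$ forces $2^{\alpha-1}e^{-x} \le 1/2$, this term can be absorbed into the left-hand side, producing exactly the constant $2^{\alpha+1}/\alpha$. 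Your argument avoids this self-referencing absorption step entirely: the substitution $r = x(1+s)$ followed by $(1+s)^{\alpha-1} \le e^{(\alpha-1)s}$ reduces everything to a single exponential integral. The payoff is a cleaner constant (indeed, $1$ instead of $2^{\alpha+1}/\alpha$) and a shorter proof; the paper's route, on the other hand, makes transparent where the specific constant $2^{\alpha+1}/\alpha$ originates, since that is precisely what comes out of the absorption.
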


\begin{proof}
We have
\begin{align} \label{iden split}
\nonumber\int_x^{\infty}r^{\alpha-1}e^{-r}dr & = e^{-x}\int_0^{\infty}(u + x)^{\alpha-1}e^{-u}du 
\\ & = e^{-x}\left[\int_0^{x}(u + x)^{\alpha-1}e^{-u}du + \int_x^{\infty}(u + x)^{\alpha-1}e^{-u}du\right]. 
\end{align}
Now, 
\begin{align*}
\int_0^x(u + x)^{\alpha-1}e^{-u}du \le \int_0^x(u + x)^{\alpha-1}du = \frac{x^{\alpha}\left(2^{\alpha}-1\right)}{\alpha} \le \frac{2^{\alpha}x^{\alpha}}{\alpha}. 
\end{align*}
For the second integral, since $x + u\le 2u$ we have
\begin{align*}
\int_x^{\infty}(u + x)^{\alpha-1}e^{-u}du \le 2^{\alpha-1}\int_x^{\infty}u^{\alpha-1}e^{-u}du. 
\end{align*}
Altogether, we get in \eqref{iden split},
\begin{align*}
\int_x^{\infty}r^{\alpha-1}e^{-r}dr \le \frac{2^{\alpha}x^{\alpha}e^{-x}}{\alpha} + 2^{\alpha-1}e^{-x}\int_x^{\infty}r^{\alpha-1}e^{-r}dr. 
\end{align*}
Since $x \ge \alpha$, we have, $2^{\alpha-1}e^{-x} \le 1/2$, which completes the proof. 
\end{proof}

We can now proceed to the proof of Lemma \ref{lem lp norm}

\begin{proof}[Proof of Lemma \ref{lem lp norm}]
To estimate the norm, notice that $ \left(1 + |\xi|^2 \right)^{-\beta/2} \le \min\left(1,|\xi|^{-\beta}\right)$, and so using polar coordinates
$$
\left\| \left(1 + |\xi|^2 \right)^{-\beta/2}e^{-\frac{t^2|\xi|_2^2}{2}}\right\|_{L_p(\R^n)}^p \le |\mathbb S^{n-1}|\int_0^{\infty}r^{n-1}\min\left(1,r^{-\beta p}\right)e^{-\frac{pt^2r^2}{2}}dr .
$$
Now,
\begin{align}
& \nonumber \int_0^{\infty}r^{n-1}\min\left(1,r^{-\beta p}\right)e^{-\frac{pt^2r^2}{2}}dr 
\\
& \nonumber \qquad = \int_0^{1}r^{n-1}e^{-\frac{pt^2r^2}{2}}dr + \int_{1}^{\infty}r^{n-1-\beta p}e^{-\frac{pt^2r^2}{2}}dr
\\ 
& \qquad \label{splitting at 1} = \int_0^{1}r^{n-1}e^{-\frac{pt^2r^2}{2}}dr + \frac 1 2 \left(\frac{2}{pt^2}\right)^{\frac{n-\beta p}{2}}\int_{\frac{pt^2}{2}}^{\infty}r^{\frac{n-\beta p}{2}-1}e^{-r}dr. 
\end{align}

\noindent{\bf Case 1: Assume $pt^2\le 2$. } To bound the first term in \eqref{splitting at 1}, use the trivial bound
\begin{align} \label{bound gamma}
\int_0^1r^{n-1}e^{-\frac{pt^2r^2}{2}}dr \le \int_0^1r^{n-1}~dr = \frac 1 n. 
\end{align}
To bound the second term, note first that
\begin{align} \label{splitting gamma}
&\int_{\frac{pt^2}2}^{\infty}r^{\frac{n-\beta p}{2}-1}e^{-r}dr = \int_{\frac{pt^2}2}^{1}r^{\frac{n-\beta p}{2}-1}e^{-r}dr + \int_{1}^{\infty}r^{\frac{n-\beta p}{2}-1}e^{-r}dr. 
\end{align}
Since $pt^2 \le 2$,
\begin{align} \label{bound int r small}
\nonumber \int_{\frac{pt^2}2}^1r^{\frac{n-\beta p}2-1}e^{-r}dr & \le \int_{\frac{pt^2}2}^1r^{\frac{n-\beta p}{2}-1}dr 
\\ & = 
\begin{cases}
\frac{2}{n-\beta p} \left(1-\left(\frac{pt^2}{2}\right)^{\frac{n-\beta p}{2}} \right) \quad & \beta p \neq n, \\
\log \left(\frac{2}{pt^2}\right) \quad & \beta p = n, \\
\end{cases}
\end{align}
and also 
\begin{align} \label{bound int r large}
\int_1^{\infty}r^{\frac{n-\beta p}2-1}e^{-r}dr \le 
\begin{cases}
1 & \frac{n-\beta p}2-1 \le 0, \\
\Gamma \left(\frac{n-\beta p}{2} \right) & \frac{n-\beta p}2-1>0. 
\end{cases}
\end{align}
Plugging \eqref{bound int r small} and \eqref{bound int r large} into \eqref{splitting gamma}, we get
\begin{multline*}
\left(\frac{2}{pt^2}\right)^{\frac{n-\beta p}{2}}\int_{\frac{pt^2}{2}}^{\infty}r^{\frac{n-\beta p}{2}-1}e^{-r}dr
\le 
\\
\begin{cases}
\frac{2}{n-\beta p} \left(\left(\frac{2}{pt^2}\right)^{\frac{n-\beta p}{2}}-1 \right) + \left(\frac{2}{pt^2}\right)^{\frac{n-\beta p}{2}}\Gamma \left(\frac{n-\beta p}{2} \right) & \beta p <n-2, 
\\ 
\frac{2}{n-\beta p} \left(\left(\frac{2}{pt^2}\right)^{\frac{n-\beta p}{2}}-1 \right) + \left(\frac{2}{pt^2}\right)^{\frac{n-\beta p}{2}} & n-2\le \beta p <n,
\\
\log \left(\frac{2e}{pt^2} \right) & \beta p = n,
\\
\frac{2}{\beta p -n} \left(1-\left(\frac{2}{pt^2}\right)^{\frac{n-\beta p}{2}} \right) + \left(\frac{2}{pt^2}\right)^{\frac{n-\beta p}{2}} & \beta p >n. 
\end{cases}
\end{multline*}
Now, if $a\ge 1$ then we have
\begin{align*}
\left|\frac{a^x-1}{x}\right| \le 
\begin{cases}
a^x \log a & 0<x\le 1, \\
a^x & x\ge 1. 
\end{cases}
\end{align*}
Thus, when $\beta p <n-2$ we have
\begin{multline*}
\frac{2}{n-\beta p} \left(\left(\frac{2}{pt^2}\right)^{\frac{n-\beta p}{2}}-1 \right) + \left(\frac{2}{pt^2}\right)^{\frac{n-\beta p}{2}}\Gamma \left(\frac{n-\beta p}{2} \right) 
\\ \le \left(\frac{2}{pt^2}\right)^{\frac{n-\beta p}{2}}\left(1 + \Gamma \left(\frac{n-\beta p}{2} \right) \right)
\le 2\left(\frac{2}{pt^2}\right)^{\frac{n-\beta p}{2}}\Gamma \left(\frac{n-\beta p}{2} \right). 
\end{multline*}
When $n-2 \le \beta p <n$ we have
\begin{multline*}
\frac{2}{n-\beta p} \left(\left(\frac{2}{pt^2}\right)^{\frac{n-\beta p}{2}}-1 \right) + \left(\frac{2}{pt^2}\right)^{\frac{n-\beta p}{2}} 
\\  \le \left(\frac{2}{pt^2}\right)^{\frac{n-\beta p}{2}}\log \left(\frac{2}{pt^2} \right) + \left(\frac{2}{pt^2}\right)^{\frac{n-\beta p}{2}}
= \left(\frac{2}{pt^2}\right)^{\frac{n-\beta p}{2}}\log \left(\frac{2e}{pt^2} \right). 
\end{multline*}
Also, when $\beta p >n$ we use the fact that when $0< a \le 1$ and $x>0$, 
\begin{align*}
\frac{1-a^x}{x} \le \log \left(\frac 1 a \right),
\end{align*}
and get
\begin{align*}
\frac{2}{\beta p-n} \left(1-\left(\frac{2}{pt^2}\right)^{\frac{n-\beta p}{2}} \right) + \left(\frac{2}{pt^2}\right)^{\frac{n-\beta p}{2}} &\le \log \left(\frac{2}{pt^2} \right) + \left(\frac{2}{pt^2}\right)^{\frac{n-\beta p}{2}} 
\\ & \le \log \left(\frac{2}{pt^2} \right) + 1
\\ & = \log \left(\frac{2e}{pt^2} \right). 
\end{align*}
Altogether,
\begin{align*}
\left(\frac{2}{pt^2}\right)^{\frac{n-\beta p}{2}}\int_{\frac{pt^2}{2}}^{\infty}r^{\frac{n-\beta p}{2}-1}e^{-r}dr \le 
\begin{cases}
2\left(\frac{2}{pt^2}\right)^{\frac{n-\beta p}{2}}\Gamma \left(\frac{n-\beta p}{2} \right) & \beta p <n-2, \\ 
\left(\frac{2}{pt^2}\right)^{\frac{n-\beta p}{2}}\log \left(\frac{2e}{pt^2} \right) & n-2\le \beta p <n, \\
\log \left(\frac{2e}{pt^2} \right)& \beta p \ge n.
\end{cases}
\end{align*}
Plugging this into \eqref{splitting at 1} and using \eqref{bound gamma}, we get
\begin{align*}
\int_0^{\infty}r^{n-1}\min\left(1,r^{-p\beta}\right)e^{-\frac{pt^2r^2}{2}}dr \le 
\begin{cases}
\left(\frac{2}{pt^2}\right)^{\frac{n-\beta p}{2}}\Gamma \left(\frac{n-\beta p}{2} \right) & \beta p <n-2 \\ 
\left(\frac{2}{pt^2}\right)^{\frac{n-\beta p}{2}}\log \left(\frac{2e}{pt^2} \right)\ & n-2\le \beta p <n \\
\log \left(\frac{2e}{pt^2}\right)& \beta p \ge n, 
\end{cases}
\end{align*}
which completes the proof in the case $pt^2 \le 2$. 

\noindent{\bf Case 2: Assume $pt^2\ge 2$. }
To estimate the first term in \eqref{splitting at 1}, we consider two different cases. If $pt^2 \ge n$, choose
\begin{align*}
r_0 = \sqrt{\frac{n}{pt^2}\log\left(\frac{pt^2}{n}\right)},
\end{align*}
and then
\begin{align*}
\int_0^1r^{n-1}e^{-\frac{pt^2r^2}{2}}dr & = \int_0^{r_0}r^{n-1}e^{-\frac{pt^2r^2}{2}}dr + \int_{r_0}^1r^{n-1}e^{-\frac{pt^2r^2}{2}}dr 
\\ & \le \int_0^{r_0}r^{n-1}~dr + \int_{r_0}^1re^{-\frac{pt^2r^2}{2}}dr
\\ & \le \frac{r_0^n}{n} + \frac{1}{pt^2}e^{-\frac{pt^2r_0^2}{2}} 
\\ & = \frac 1 n\left(\frac n {pt^2}\log\left(\frac{pt^2}{n}\right)\right)^{n/2} + \frac{1}{pt^2}\left(\frac n {pt^2}\right)^{n/2}
\\ & \le \left(\frac n {pt^2}\log\left(\frac{ept^2}{n}\right)\right)^{n/2}. 
\end{align*}
Otherwise, if $pt^2\le n$, choose
\begin{align*}
r_0 = e^{-\frac{pt^2}{n}}. 
\end{align*}
Note that we have, say, $r_0 \ge 1/3$. Then, since $1-e^{-x}\le x$,
\begin{align*}
\int_0^1r^{n-1}e^{-\frac{pt^2r^2}{2}}dr & \le \frac{r_0^n}{n} + \frac{1}{pt^2}\left(e^{-\frac{pt^2r_0^2}{2}}-e^{-\frac{pt^2}{2}}\right) 
\\ & \le \frac 1 ne^{-pt^2} + \frac {e^{-\frac{pt^2r_0^2}{2}}} {pt^2}\cdot \frac{pt^2(1-r_0^2)}{2}
\\ & \le \frac 1 n e^{-pt^2} + \frac{pt^2}{n} e^{-\frac{pt^2}{18}} 
\\ & \le 2e^{-\frac{pt^2}{18}}. 
\end{align*}
For the first term in \eqref{splitting at 1} we thus have (assuming that $n \ge 2$),
\begin{align} \label{1st term t large}
\int_0^1 r^{n-1}e^{-\frac{pt^2r^2}{2}}dr \le
\begin{cases}
\left(\frac n {pt^2}\log\left(\frac{ept^2}{n}\right)\right)^{n/2} & pt^2\ge n,
\\
2e^{-\frac{pt^2}{18}} & pt^2 \le n. 
\end{cases}
\end{align}
If $n-\beta p \le 2$ then
\begin{align} \label{n small}
\int_1^{\infty}r^{n-\beta p-1}e^{-\frac{pt^2r^2}{2}}dr \le \int_1^{\infty}re^{-\frac{pt^2r^2}{2}}dr = \frac{e^{-\frac{pt^2}{2}}}{pt^2}\le e^{-\frac{pt^2}{2}}. 
\end{align}
Otherwise, if $n-\beta p \ge 2$, then again we consider two different cases. If $pt^2\le n-\beta p$ , we have
\begin{align} \label{n large t small}
\nonumber \frac 1 2 \left(\frac{2}{pt^2}\right)^{\frac{n-\beta p}{2}}\int_{\frac{pt^2}{2}}^{\infty}r^{\frac{n-\beta p}{2}-1}e^{-r}dr & \le \frac 1 2\left(\frac{2}{pt^2}\right)^{\frac{n-\beta p}{2}}\int_{0}^{\infty}r^{\frac{n-\beta p}{2}-1}e^{-r}dr 
\\ & = \frac 1 2\left(\frac{2}{pt^2}\right)^{\frac{n-\beta p}{2}}\Gamma\left(\frac{n-\beta p}{2}\right). 
\end{align}
Otherwise, suppose that we still have $n-\beta p \ge 2$, but now $pt^2 \ge n-\beta p$. Then by Proposition \ref{prop x alpha}, we have 
\begin{align} \label{n large t large}
\frac 1 2\left(\frac 2 {pt^2}\right)^{\frac{n-\beta p}{2}}\int_{\frac{pt^2}{2}}^{\infty}r^{\frac{n-\beta p}{2}-1}e^{-r}dr \le \frac {2^{\frac{n-\beta p}{2}}e^{-\frac{pt^2}{2}}}{n-\beta p} \le 2^{\frac{n-\beta p}{2}-1}e^{-\frac{pt^2}{2}} .
\end{align}
Altogether, combining \eqref{n small}, \eqref{n large t small} and \eqref{n large t large}, we obtain
\begin{align} \label{2nd term t large}
\frac 1 2\left(\frac{2}{pt^2}\right)^{\frac{n-\beta p}{2}}\int_{\frac{pt^2}{2}}^{\infty}r^{\frac{n-\beta p}{2}-1}e^{-r}dr 
\le
\begin{cases}
2^{\frac{n-\beta p}{2}}e^{-\frac{pt^2}{2}}& 2\le n-\beta p \le pt^2,
\\
\left(\frac{2}{pt^2}\right)^{\frac{n-\beta p}{2}}\Gamma\left(\frac{n-\beta p}{2}\right) & 2 \le pt^2 \le n-\beta p ,
\\
e^{-\frac{pt^2}{2}} & n- \beta p \le2 \le pt^2. 
\end{cases}
\end{align}
Plugging \eqref{1st term t large} and \eqref{2nd term t large} into \eqref{splitting at 1} gives
\begin{multline*}
\int_0^{\infty}r^{n-1}\min\left(1, r^{-\beta p}\right)e^{-\frac{pt^2r^2}{2}}dr 
\le 
\\
\begin{cases}
2e^{-\frac{pt^2}{18}} + \left(\frac{2}{pt^2}\right)^{\frac{n-\beta p}{2}}\Gamma\left(\frac{n-\beta p}{2}\right) & 2 \le pt^2 \le n-\beta p,
\\
2e^{-\frac{pt^2}{18}} + 2^{\frac{n-\beta p}{2}}e^{-\frac{pt^2}{2}}& 2 \le n-\beta p \le pt^2 \le n,
\\
\left(\frac n {pt^2}\log\left(\frac{ept^2}{n}\right)\right)^{n/2} + 2^{\frac{n-\beta p}{2}}e^{-\frac{pt^2}{2}}& 2\le n-\beta p \le n \le pt^2,
\\
2e^{-\frac{pt^2}{18}} + e^{-\frac{pt^2}{2}} & n- \beta p \le 2 \le pt^2 \le n,
\\ \left(\frac n {pt^2}\log\left(\frac{ept^2}{n}\right)\right)^{n/2} + e^{-\frac{pt^2}{2}} & n- \beta p \le 2 \le n \le pt^2 .
\end{cases}
\end{multline*}
In order to simplify the last expression, first notice that when $n \le pt^2$, we have
\begin{align*}
e^{-\frac{pt^2}{2}} \le \left(\frac n {pt^2}\log\left(\frac{ept^2}{n}\right)\right)^{n/2}. 
\end{align*}
Also, we have that whenever $pt^2\ge n-\beta p \ge 2$, since we have that $1- \log 2 >1/4$ we get the following estimate,
\begin{align*}
2^{\frac{n-\beta p}{2}}e^{-\frac{pt^2}{2}} \le e^{-\frac{pt^2}{2}\left(1 - \log 2\right)} \le e^{-\frac{pt^2}{8}} \le e^{-\frac{pt^2}{18}}. 
\end{align*}
Hence, we conclude that,
\begin{align*}
\int_0^{\infty}r^{n-1}\min\left(1, r^{-\beta p}\right)e^{-\frac{pt^2r^2}{2}}dr 
\le
\begin{cases}
2e^{-\frac{pt^2}{18}} + \left(\frac{2}{pt^2}\right)^{\frac{n-\beta p}{2}}\Gamma\left(\frac{n-\beta p}{2}\right) & 2 \le pt^2 \le n-\beta p,
\\
3e^{-\frac{pt^2}{18}}& n-\beta p \le pt^2 \le n,
\\
\left(\frac {2n} {pt^2}\log\left(\frac{ept^2}{n}\right)\right)^{n/2} & n \le pt^2. 
\end{cases}
\end{align*}
This completes the proof of Lemma \ref{lem lp norm}.
\end{proof}

We are now in a position to prove Theorem \ref{thm decay}. 

\begin{proof}[Proof of Theorem \ref{thm decay}]
we have,
\begin{align*}
\int_{\R^n}\big|\hat f_X (\xi )\big|e^{-\frac{t^2|\xi|_2^2}{2}}d\xi \le 
\left\| \left(1 + |\xi|^2 \right)^{\beta/2}\hat f_X\right\|_{L_{p'}(\R^n)} ~ \left\| \left(1 + |\xi|^2 \right)^{-\beta/2}e^{-\frac{t^2|\xi|_2^2}{2}}\right\|_{L_p(\R^n)}. 
\end{align*}
Since $1< p\le 2$, $\mathcal F:L_{p} \to L_{p'}$ is bounded with norm 1. Hence,
\begin{eqnarray*}
\left\| \left(1 + |\xi|^2 \right)^{\beta/2}\hat f_X\right\|_{L_{p'}(\R^n)} & = & \left\|\mathcal F \left(\mathcal F^{-1} \left( \left(1 + |\xi|^2 \right)^{\beta/2}\hat f_X \right) \right)\right\|_{L_{p'}(\R^n)} 
\\ & \le & \left\|\mathcal F^{-1} \left( \left(1 + |\xi|^2 \right)^{\beta/2}\hat f_X \right)\right\|_{L_p(\R^n)} 
\\ & \stackrel{\eqref{def sobolev}}{ = } & \|f_X\|_{\beta,p}. 
\end{eqnarray*}
Altogether,
$$
\int_{\R^n}\big | \hat f_X (\xi)\big |e^{-\frac{t^2|\xi|_2^2}{2}}d\xi \le\left\|\ \left(1 + |\xi|^2 \right)^{-\beta/2}e^{-\frac{t^2|\xi|_2^2}{2}}\right\|_{L_p(\R^n)}\|f_X\|_{\beta,p}. 
$$
Now use Lemma \ref{lem lp norm} to complete the proof.
\end{proof}

\smallskip

\subsection{Sobolev Embeddings and Theorem \ref{thm decay}}

We did not study the sharpness of Theorem \ref{thm decay}. In some cases, Sobolev Embedding Theorems can imply simpler proofs and better dependence in $t$. We are grateful for the referee who pointed this to us.

\noindent{\bf The case $n-\beta p <0$.}
In this case we can write
\begin{align}\label{beta p large}
\mathbb P\big(\|X\| \le t\big) = \int_{tK}f_X(x)dx \le |K|t^n\|f_X\|_{\infty}.
\end{align}
Since we assumed in Theorem \ref{thm decay} that $p\le 2$, we have just as in the proof of Theorem \ref{thm decay}
\begin{align}\label{bound sup}
\nonumber \|f_X\|_{\infty} \le  \int_{\mathbb R^n}|\hat f_X(\xi)|d\xi & \le \left\|\left(1+|\xi|^2\right)^{-\frac{\beta}{2}}\right\|_{L_p(\mathbb R^n)}\left\|\left(1+|\xi|^2\right)^{\frac{\beta}{2}}\hat f_X(\xi)\right\|_{L_{p'}(\R^n)}
\\ & \le \left\|\left(1+|\xi|^2\right)^{-\frac{\beta}{2}}\right\|_{L_p(\mathbb R^n)}\|f_X\|_{\beta,p}.
\end{align}
Now,
\begin{align}\label{bound int potential}
\nonumber \left\|\left(1+|\xi|^2\right)^{-\frac{\beta}{2}}\right\|_{L_p(\mathbb R^n)}  & = \left(\int_{\R^n}\frac{d\xi}{\left(1+|\xi|^2\right)^{\frac{\beta p}{2}}}\right)^{\frac 1 p}
\\ \nonumber &  = |\mathbb S^{n-1}|^{\frac 1 p}\left(\int_0^{\infty}\frac{r^{n-1}dr}{\left(1+r^2\right)^{\frac{\beta p}{2}}}\right)^{\frac 1 p}
\\ & \le |\mathbb S^{n-1}|\left(\frac 1 {\beta p-n}+\frac 1 n\right)^{\frac 1 p}.
\end{align}
Plugging \eqref{bound sup} and \eqref{bound int potential} into \eqref{beta p large}, we get
\begin{align*}
\mathbb P\big(\|X\| \le t\big) \le |\mathbb S^{n-1}|\left(\frac 1 {\beta p-n}+\frac 1 n\right)^{\frac 1 p}|K|t^{n}\|f_X\|_{\beta,p}.
\end{align*}
While the bound gives a better dependence on $t$ when $t$ is small (as it removes the $\log$ term), its dependence on the other parameters can be worse as the implied constant tends to infinity as $\beta p \to n$.

\noindent{\bf The case $n-\beta p > 0$.}
The Sobolev Embedding Theorem (see e.g. \cite[Ch. 9]{Bre11} for the case where $\beta$ is an integer)
\begin{align*}
\|f\|_{L_q(\R^n)} \le C(\beta,n)\|f\|_{\beta,p},
\end{align*}
where $q = \frac{n p }{n- \beta p}$. Thus, we have
\begin{align}\label{beta p small}
\nonumber \mathbb P\big(\|X\| \le t\big) & = \int_{tK}f_X(x)dx \le |tK|^{\frac 1 {q'}}\|f_X\|_{L_q(\R^n)} 
\\ & \le C(\beta,n)|K|^{\frac 1 {q'}}t^{\beta+\frac n {p'}}\|f_X\|_{\beta,p},
\end{align}
Note that once again, \eqref{beta p small} gives a better dependence on $t$ for small values of $t$. However, the term $|K|^{\frac 1 {q'}}$ might be worse than the term that appears in Theorem \ref{thm decay}. 

\noindent{\bf The case $n-\beta p =0$.} 
In this case we have 
\begin{align*}
\|f\|_{L_q(\R^n)} \le C(\beta,n,q)\|f\|_{\beta,p}.
\end{align*}
where now $q\ge p$ and $C(\beta,n,q) \to \infty$ as $q\to \infty$. As before, using the Sobolev Embedding Theorem, we get 
\begin{align*}
\mathbb P\big(\|X\| \le t\big) \le C(\beta,n,q)|K|^{\frac 1 {q'}}t^{\beta+\frac n {p'}}\|f_X\|_{\beta,p},
\end{align*}
which gives a better dependence in $t$ when $t$ is small, but possibly a worse dependence on the other parameters.

\section{Littlewood-Offord type estimates} \label{sec lo}

Let $a_1, \ldots, a_N$ be (deterministic) vectors in $\R^n$, and denote by $A$ the $N\times n$ matrix whose rows are $a_1, \ldots, a_N$. Let $\delta_1,\dots, \delta_N$ be i.i.d. random variables for which there exists $b \in (0,1)$ such that
\begin{align} \label{eq:small ball xi}
\sup_{x \in \R} \p \left( | \delta_i - x| \le 1 \right) \le 1-b . 
\end{align}
Now, consider the random vector
\begin{align} \label{def x}
X = \sum_{k = 1}^N\delta_k a_k. 
\end{align}
As in \cite{FS07, RV09}, the small ball estimate of $X$ involves the least common denominator of the matrix $A$. Thus, for $\alpha > 0$ and $\gamma\in \left(0,1 \right)$, define
\begin{align} \label{def lcd}
\LCD_{\alpha, \gamma} \left(A \right) \stackrel{\mathrm{def}}{ = } \inf \left\{ |\theta|_2 : \theta \in \R^n, d_2\left(A \theta, \Z^n \right) < \min\left(\gamma |A \theta| _2, \alpha \right) \right\}. 
\end{align}
Recall that $|\cdot|_2$ denotes the Euclidean norm.

\begin{thm} \label{thm lo}
Let $X$ be defined as in \eqref{def x}, and assume that the $N\times n$ matrix $A$ satisfies $|A\theta|_2 \ge |\theta|_2$ for all $\theta$ in $\R^n$. Assume also that $t \ge \frac{\sqrt n}{\mathrm{LCD}_{\alpha,\gamma}(A)}$. Then
$$
\p \left(\|X\| \le t \right) \le \frac{|K|}{\gamma_n \left(K \right)} \left(\frac{C_K}{\pi}\right)^n \left( \left(\frac{t}{\gamma \sqrt b} \right)^n + \exp \left(-b \alpha^2 \right) \right),
$$
where $C_K$ is again the quasi-norm constant from \eqref{const quasi}. 
In particular, for any $p > 0$, if we let $|x|_p = \left(\sum_{j=1}^n|x_j|^p\right)^{1/p}$ for $x = (x_1,\dots,x_n)\in \R^n$, then
$$
\p \left( |X|_p \le t n^{1/p} \right) \le \left(C\cdot C_p\right)^n \left( \left(\frac{t}{\gamma \sqrt b} \right)^n + \exp \left(-b \alpha^2 \right) \right) ,
$$
where $C_p = \min\left\{2^{1/p-1},1\right\}$. 
\end{thm}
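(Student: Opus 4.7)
The plan is to apply the smoothed Fourier estimate \eqref{small ball smoothed} and reduce the problem to controlling $I(t) := \int_{\mathbb{R}^n}|\phi_X(\xi)|\,e^{-t^2|\xi|_2^2/2}\,d\xi$. Independence of the $\delta_k$'s gives $\phi_X(\xi)=\prod_{k=1}^N \phi_{\delta_k}(\langle a_k,\xi\rangle)$, and the natural change of variables $\xi=2\pi\theta$ aligns this integrand with the $\mathbb{Z}^N$-distance appearing in the definition \eqref{def lcd} of the LCD.

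The hard part is establishing a pointwise bound of the form
\[|\phi_X(2\pi\theta)|^2 \;\le\; \exp\!\bigl(-c\,b\, d_2(A\theta,\mathbb{Z}^N)^2\bigr)\]
for some absolute constant $c>0$. I would do this by symmetrization: introducing independent copies $\delta_k'$ and setting $\eta_k=\delta_k-\delta_k'$, one has $|\phi_{\delta_k}(s)|^2 = 1-2\mathbb{E}\sin^2(s\eta_k/2)$. Taking the product, using $1-x\le e^{-x}$, applying the elementary inequality $\sin^2(\pi y)\ge 4\,d(y,\mathbb{Z})^2$, and invoking the anti-concentration hypothesis \eqref{eq:small ball xi} (which gives $\mathbb{P}(|\eta_k|>1)\ge b$) converts a lower bound on $\sum_k\mathbb{E}\,d(\langle a_k,\theta\rangle\eta_k,\mathbb{Z})^2$ into the desired factor $b\,d_2(A\theta,\mathbb{Z}^N)^2$ in the exponent.

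With this bound in hand, the integration domain splits into three regions governed by $D:=\mathrm{LCD}_{\alpha,\gamma}(A)$. On $\{|\theta|_2\ge D\}$ I discard $|\phi_X|$ and use only the Gaussian weight $e^{-2\pi^2t^2|\theta|_2^2}$; since $t\ge\sqrt n/D$, this tail is super-exponentially small and is absorbed into the other terms. Inside $\{|\theta|_2<D\}$, the defining property of the LCD combined with $|A\theta|_2\ge|\theta|_2$ yields $d_2(A\theta,\mathbb{Z}^N)\ge\min(\gamma|\theta|_2,\alpha)$, which I split further: where $\gamma|A\theta|_2\le\alpha$, the Fourier bound gives $|\phi_X(2\pi\theta)|^2\le\exp(-cb\gamma^2|\theta|_2^2)$ and integrating this Gaussian over $\mathbb{R}^n$ contributes $(C/(\gamma\sqrt b))^n$; where $\gamma|A\theta|_2>\alpha$, one has $|\phi_X(2\pi\theta)|^2\le\exp(-cb\alpha^2)$ and integrating $e^{-2\pi^2t^2|\theta|_2^2}$ over $\mathbb{R}^n$ contributes $C^n t^{-n}\exp(-cb\alpha^2)$. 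Collecting these and multiplying by the prefactor $(C_K t/\pi)^n$ from \eqref{small ball smoothed}, the first contribution becomes $(Ct/(\pi\gamma\sqrt b))^n$ while in the second the $t^n$ cancels the $t^{-n}$, producing the stated inequality.

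For the $\ell_p$ assertion I would take $K=n^{1/p}B_p^n$, the unit ball of the quasi-norm $|\cdot|_p/n^{1/p}$. The quasi-norm constant from \eqref{const quasi} is $C_K=1$ for $p\ge1$ and $C_K=2^{1/p-1}$ for $p<1$ (from $|x+y|_p^p\le|x|_p^p+|y|_p^p$), the two cases together being captured by $C_p=\min\{2^{1/p-1},1\}$. The ratio $|K|/\gamma_n(K)$ is bounded by $C^n$ for an absolute $C$, using the formula $|B_p^n|=(2\Gamma(1+1/p))^n/\Gamma(1+n/p)$ with Stirling together with a Gaussian concentration lower bound on $\gamma_n(n^{1/p}B_p^n)$; substituting into the main estimate yields the stated $\ell_p$ inequality.
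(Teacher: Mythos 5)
The key step you rely on---a pointwise bound $|\phi_X(2\pi\theta)|^2\le\exp\big(-c\,b\,d_2(A\theta,\Z^N)^2\big)$---is false, and the failure occurs exactly in the sentence where you ``convert'' a lower bound on $\sum_k\E\, d(\langle a_k,\theta\rangle\eta_k,\Z)^2$ into the factor $b\,d_2(A\theta,\Z^N)^2$. Symmetrization gives the distance to $\Z^N$ of the \emph{randomly dilated} point $\eta A\theta$, and there is no inequality of the form $\E\big[d(x\eta,\Z)^2\,\big|\,|\eta|\ge1\big]\ge c\,d(x,\Z)^2$: take $n=N=1$, $a_1=1$, and $\delta\in\{0,3\}$ with probability $1/2$ each, so that \eqref{eq:small ball xi} holds with $b=1/2$; then $|\phi_\delta(2\pi\theta)|=|\cos(3\pi\theta)|=1$ at $\theta=1/3$, while $d_2(A\theta,\Z)=1/3$, so your claimed bound fails there. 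This is precisely why the paper, following \cite{FS07, RV09}, keeps the dilation: in Lemma \ref{integ struc} the conditional expectation over $\bar\delta$ is pulled outside the $\theta$-integral by Jensen's inequality and replaced by a supremum over realizations $z\ge 1/(2\pi)$, so one must bound $\int_{\R^n}\exp\big(-4b\,d_2(z A\theta/t,\Z^N)^2-|\theta|_2^2/2\big)d\theta$ \emph{uniformly in $z$}. Your domain decomposition does not survive this correction: for a fixed $z$ the LCD hypothesis controls $d_2(zA\theta/t,\Z^N)$ only on $\{|\theta|_2< t\,\LCD_{\alpha,\gamma}(A)/z\}$, and since $\bar\delta$ is unbounded this region can be arbitrarily small while its complement carries almost all of the Gaussian mass; so splitting $\R^n$ according to $|\theta|_2$ versus $D$ and $\gamma|A\theta|_2$ versus $\alpha$ cannot yield a $z$-uniform estimate. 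The paper's escape is geometric: it bounds $\gamma_n(T_s)$ for the level sets $T_s=\{d_2(zA\theta/t,\Z^N)\le s\}$ by a covering argument (Lemma \ref{lem covering}, Corollary \ref{cor measure}) in which the covering radius $r=2st/(\gamma z)$ and the separation $R=\sqrt n/z$ both scale like $1/z$, so only the $z$-free ratio $r/R=2st/(\gamma\sqrt n)$ enters; the term $\exp(-b\alpha^2)$ then arises from integrating $8bs\,e^{-4bs^2}$ over $s\ge\alpha/2$, where the covering is unavailable.

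Two smaller points. Even granting your pointwise bound, the claim that the region $\{|\theta|_2\ge D\}$ is ``absorbed into the other terms'' needs proof: discarding $|\phi_X|$ there only produces a term of order $e^{-c(tD)^2}$, which under the sole hypothesis $tD\ge\sqrt n$ is $e^{-cn}$, and this is not in general dominated by $(t/(\gamma\sqrt b))^n+\exp(-b\alpha^2)$ (for instance when $\alpha$ is very large, $N\gg n$ and $t=\sqrt n/\LCD_{\alpha,\gamma}(A)$, both terms on the right can be far smaller than $e^{-cn}$); in the paper no such term appears, since the entire integral is rewritten through $\gamma_n(T_s)$ in \eqref{int level set}. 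Finally, in the $\ell_p$ part, what \eqref{const quasi} gives for $|\cdot|_p$ with $p<1$ is the constant $2^{1/p-1}\ge 1$, i.e.\ $\max\{2^{1/p-1},1\}$, so identifying the quasi-norm constant with $C_p=\min\{2^{1/p-1},1\}$ is not correct as stated, and the bound $|K|/\gamma_n(K)\le C^n$ with an absolute constant requires care for small $p$; the paper gives no proof of this ``in particular'' assertion, so there is nothing to compare against there.
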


The first step of the proof is to estimate the small ball probability using the integer structure of the vectors $a_i$. To do that, for a given $\theta \in \R^n$, define
\begin{align} \label{def f}
f(\theta) \stackrel{\mathrm{def}}{ = } \inf_{m\in \mathbb Z^N}\left| \frac z t A\theta-m\right|_2. 
\end{align}

\begin{lem}[Small ball estimate in terms of integer structure] \label{integ struc}
Let $X$ be a random vector as in \eqref{def x} and let $t > 0$. Then
$$
\p \left(\|X\| \le t \right) \le \frac{|K|}{\gamma_n(K)}\left(C'_Kt\right)^n\cdot \sup_{z \ge \frac{1}{2 \pi}}\int_{\R^n}e^{-4bf(\theta)^2-{|\theta|_2^2}/{2}}d\theta. 
$$
\end{lem}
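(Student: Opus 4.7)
The natural entry point is the smoothed Fourier bound \eqref{small ball smoothed}. Specializing to $X = \sum_{k=1}^N \delta_k a_k$, independence of the $\delta_k$ factors the characteristic function,
$$\phi_X(\xi) = \prod_{k=1}^N \phi_{\delta_k}(\langle a_k,\xi\rangle),$$
so the whole task reduces to giving a good upper bound on $|\phi_X(\xi)|$ that reflects the integer arithmetic structure of $A\xi$ and the anti-concentration \eqref{eq:small ball xi}.

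The single-variable workhorse is a standard consequence of \eqref{eq:small ball xi}: using the symmetrization $|\phi_\delta(s)|^2 = \E\cos(s(\delta-\delta'))$, with $\delta'$ an independent copy of $\delta$, together with the trigonometric inequality $1-\cos y \ge 8\,\|y/(2\pi)\|_{\R/\Z}^2$ and the elementary bound $u \le e^{-(1-u)}$ on $[0,1]$, one arrives at an estimate of the form
$$|\phi_\delta(s)|^2 \le \exp\bigl(-c\,b\,\|zs\|_{\R/\Z}^2\bigr)$$
for a suitably chosen $z \ge 1/(2\pi)$. The factor $b$ enters through the spread of $\delta-\delta'$ guaranteed by \eqref{eq:small ball xi} (namely $\p(|\delta-\delta'|\ge 1)\ge b$), and the scalar $z$ is needed because only after an appropriate rescaling does the displacement $z(\delta-\delta')$ land in a range where its distance to $\Z$ is bounded below on average. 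I would produce such a $z$ by an averaging argument over a bounded window of rescalings, from which the threshold $z \ge 1/(2\pi)$ emerges.

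With this per-coordinate bound in hand, the product structure converts the sum of squared distances-to-$\Z$ into the squared Euclidean distance of $zA\xi$ to $\Z^N$, giving
$$|\phi_X(\xi)| \le \exp\bigl(-4b\,\min_{m\in\Z^N}|zA\xi-m|_2^2\bigr).$$
Plugging this into \eqref{small ball smoothed} and performing the change of variables $\theta = t\xi$ turns $\min_m|zA\xi-m|_2$ into $f(\theta) = \min_m |(z/t)A\theta - m|_2$ and converts the weight $e^{-t^2|\xi|_2^2/2}$ into $e^{-|\theta|_2^2/2}$, producing the claimed inequality. Since the choice of $z$ was existential, bounding by the supremum over $z \ge 1/(2\pi)$ is valid.

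The main obstacle is precisely the single-variable estimate: promoting the qualitative spread of $\delta - \delta'$ supplied by \eqref{eq:small ball xi} into a quantitative exponential bound of the form $\exp(-c b\|zs\|_{\R/\Z}^2)$ is the only step requiring genuine work, and the precise threshold $z \ge 1/(2\pi)$ is dictated by the averaging window used there. Once that is secured, the factoring of $\phi_X$, the product step, and the final change of variables are routine bookkeeping.
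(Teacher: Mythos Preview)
Your skeleton matches the paper exactly: start from \eqref{small ball smoothed}, factor $\phi_X$ by independence, symmetrize via $|\phi_\delta(s)|^2=\E\cos(s\bar\delta)$ with $\bar\delta=\delta-\delta'$, use $u\le e^{-(1-u)}$ and $1-\cos y\ge\frac{2}{\pi^2}\min_{m}|y-2\pi m|^2$, then substitute $\theta=t\xi$. The gap is in your account of the parameter $z$. You describe $z$ as a deterministic rescaling, to be ``produced by an averaging argument over a bounded window of rescalings'', so that $|\phi_\delta(s)|^2\le\exp(-c\,b\,\|zs\|_{\R/\Z}^2)$ holds for \emph{all} $s$ with one and the same $z$. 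No such fixed $z$ exists in general: whenever $|\phi_\delta(s)|$ is close to $1$ the bound forces $zs$ to be close to an integer, and there is no reason the near-maxima of $|\phi_\delta|$ should lie on a single arithmetic progression $\frac1z\Z$. If instead you let $z$ depend on $s$ (hence on $\theta$), the final step breaks: you would only be bounding by $\int\sup_z(\cdots)\,d\theta$, not by $\sup_z\int(\cdots)\,d\theta$ as the lemma asserts.

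In the paper $z$ is not a rescaling parameter at all; it is simply the value taken by the random variable $\bar\delta$ (after absorbing the factor $2\pi$). What the symmetrization and trigonometric inequality actually deliver at the single-variable level is
\[
|\phi_\delta(s)|\le\exp\Bigl(-4b\,\E\bigl[\min_{m\in\Z}|s\bar\delta-m|^2\ \big|\ |\bar\delta|\ge 1/(2\pi)\bigr]\Bigr),
\]
with a conditional expectation in the exponent rather than a fixed $z$. Multiplying over $k$ (one may use the \emph{same} $\bar\delta$ in every factor, since the per-coordinate bound is deterministic) turns the exponent into $-4b\,\E[\,f(\theta)^2\mid\cdots\,]$. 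The step your outline is missing is then Jensen's inequality for the convex map $x\mapsto e^{-x}$: it gives $\exp(-4b\,\E[f^2])\le\E[\exp(-4bf^2)]$, after which Fubini pulls the expectation outside the $\theta$-integral and one replaces $\E[\,\cdot\mid|\bar\delta|\ge 1/(2\pi)]$ by $\sup_{z\ge1/(2\pi)}$. It is this Jensen step, not an averaging-over-rescalings device, that legitimately places the supremum outside the integral.
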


\begin{proof}
By \eqref{small ball smoothed} we have 
$$
\p \left( \|X\| \le t \right) \le \frac{|K|}{\gamma_n \left(K \right)} \left(C'_Kt\right)^n\int_{\R^n} \big|\phi_{X}(\xi)\big|e^{-\frac{t^2|\xi|_2^2}{2}} d\xi. 
$$
Setting $\theta = t\xi$, 
\begin{align} \label{change var}
t^n \int_{\R^n} \big| \phi_{X} (\xi) \big|e^{-\frac{t^2|\xi|_2^2}{2}} d\xi = \int_{\R^n} \big| \phi_{X} ({\theta}/{t}) \big| e^{-\frac{ |\theta|_2^2}{2}} d\theta. 
\end{align}
Using the definition of $X$, and the independence of $\delta_1,\ldots,\delta_N$, we have 
\begin{multline} \label{char prod}
\left| \phi_{X} \left({\theta}/{t}\right) \right| = \mathbb E \exp\left(i\left\langle \sum_{i = 1}^N\delta_i a_i, \theta/t\right\rangle\right) 
\\ = \prod_{k = 1}^N \mathbb E\exp\left(i\delta_k \frac{\langle a_k,\theta\rangle}{t} \right) = \prod_{k = 1}^N \left| \phi_\delta\left( \frac{\langle \theta, a_k \rangle}{t} \right) \right|,
\end{multline}
where $\delta$ is an independent copy of $\delta_1,\dots,\delta_N$. In order to estimate the right side of \eqref{char prod}, follow the conditioning argument that was used in \cite{FS07, RV09}. Let $\delta'$ be an independent copy of $\delta$, and denote by $\bar\delta$ the symmetric random variable $\delta-\delta'$. We have, $|\phi_{\delta} \left(\xi \right)|^2 = \E\cos \left(\xi\bar\delta \right)$. Using the inequality $|x|\le \exp \left(- \left(1-x^2 \right)/2 \right)$, which is valid for all $x\in\R$, we obtain 
\begin{align} \label{bound char delta}
|\phi_{\delta} \left(\xi \right)| \le \exp \left( - \frac{\left(1-\E\cos \left(\xi\bar\delta \right) \right)}{2} \right). 
\end{align}
By assumption \eqref{eq:small ball xi} it follows that $\p \left(|\bar\delta| \ge 1 \right)\ge b$. Therefore, by conditioning on $\bar\delta$, we get
\begin{align*}
1-\E\cos \left(\xi\bar\delta \right) & \ge \p \left(|\bar\delta|\ge 1 \right) \cdot \E \left( 1 - \cos \left(\xi \bar \delta \right) \Big| | \bar \delta | \ge 1 \right) 
\\ & \ge b\E \left( 1 - \cos \left(\xi \bar \delta \right) \Big| | \bar \delta | \ge 1 \right). 
\end{align*}
By the fact that $1 - \cos \theta \geq \frac{2}{\pi^2} \theta^2$, for any $|\theta|\le \pi$, we have for any $\theta\in\R$, 
$$
1 - \cos \theta \ge\frac{2}{\pi^2} \min_{m \in \Z} |\theta - 2\pi m|^2. 
$$
Hence,
\begin{align*}
1-\E\cos \left(\xi\bar\delta \right) & \ge \frac {2b} {\pi^2} \cdot \E\left( \min_{m \in \Z} \big| {\xi\bar\delta} - 2\pi m \big|^2 \Big| \big|{\bar\delta} \big| \ge 1 \right) 
\\ & = 8b \cdot \E\left( \min_{m \in \Z} \big| {\xi\bar\delta} - m \big|^2 \Big| \big| {\bar\delta} \big| \ge 1/2\pi \right). 
\end{align*}
Plugging this into \eqref{bound char delta} gives
\begin{align} \label{bound with exp}
\big|\phi_{\delta}(\xi)\big| \le \exp\left(-4b \mathbb E\left( \min_{m \in \Z} \big| {\xi\bar\delta} - m \big|^2 \Big| \big| {\bar\delta} \big| \ge 1/2\pi \right)\right). 
\end{align}
Replacing the conditional expectation with supremum over all the possible values $z \ge 1/2\pi$ and using Jensen's inequality, we get
\begin{eqnarray}
\nonumber && \int_{\R^n} \left|\phi_X\left({\theta}/{t}\right)\right|e^{-{|\theta|_2^2}/{2}}d\theta 
\\ \nonumber && \qquad \stackrel{\eqref{char prod}}{ = } \int_{\R^n}\prod_{k = 1}^N\left|\phi_{\delta}\left(\frac{\langle \theta,a_k\rangle}{t}\right)\right|e^{-{|\theta|_2^2}/{2}}d\theta \\
\nonumber && \qquad \stackrel{\eqref{bound with exp}}{\le} \int_{\R^n} \exp \left( -4b\cdot \E \left( \sum_{k = 1}^N \min_{m \in \Z} \left| {\frac{\langle \theta, a_k \rangle}{t} \bar\delta} - m \right|^2 \Bigg| \big| {\bar\delta} \big| \ge 1/2\pi \right) -{|\theta|_2^2}/{2}\right) d\theta \\
\nonumber && \qquad \le \E \left[ \int_{\R^n} \exp \left( -4b \min_{m \in \Z^N} \left| {\frac{\bar\delta}{t} A\theta} - m \right|_2^2 -{|\theta|_2^2}/{2}\right) d\theta \Bigg| \big| {\bar\delta} \big| \ge 1/2\pi \right] \\
\nonumber & & \qquad \le \sup_{z \ge 1/2\pi} \int_{\R^n} \exp \left( -4b f \left(\theta \right)^2-{|\theta|_2^2}/{2} \right)d\theta .
\end{eqnarray}
Using \eqref{change var} the result follows. 
\end{proof}

Define the set 
$$
T_s \stackrel{\mathrm{def}}{ = } \left\{\theta\in \R^n : f(\theta) \le s\right\}. 
$$
The next step in the proof is to rewrite the integral that appears in Lemma \ref{integ struc} in the following way:
\begin{multline} \label{int level set}
\int_{\R^n} \exp \left( -4b f \left(\theta \right)^2 \right)\exp \left( -|\theta|_2^2 /2 \right) d\theta 
\\ = \int_{\R^n} \int_{s \ge f \left(\theta \right)} 8bs \exp \left( -4b s^2 \right) ds \exp \left(-|\theta|_2^2 /2 \right) d\theta
\\ = \left(2\pi \right)^{n/2} \int_0^{\infty} 8bs \exp \left(-4bs^2 \right) \gamma_n \left(T_s\right) ds,
\end{multline}
which means that we have to bound $\gamma_n(T_s)$. To do that, we start with the following covering lemma. 

\begin{lem}[Covering of $T_s$] \label{lem covering}
Let $\alpha > 0$ and $\gamma \in (0,1)$. Assume that $t \ge \frac{\sqrt n}{\mathrm{LCD}_{\alpha, \gamma}(A)}$. Assume also that $|A\theta|_2 \ge |\theta|_2$ for all $\theta \in \R^n$. If $0 \le s \le \alpha/2$, then there exist vectors $\{x_i\}_{i \in I}\subseteq \R^n$ such that
\begin{align} \label{good covering}
T_s \subseteq \bigcup_{i\in I}B(x_i, r) \text{ and } |x_i-x_{i'}|_2 \ge R, \forall i \neq i',
\end{align}
where $r = \frac{2st}{\gamma z}$ and $R = \frac{\sqrt n}{z}$. Moreover, for any $j \ge 1$, 
\begin{align} \label{small shells}
\card \left( \left\{ i \in I : j R \le |x_i|_2 < \left( j + 1 \right)R \right\} \right) \le n2^n \left(j + 1 \right)^{n-1}. 
\end{align}
\end{lem}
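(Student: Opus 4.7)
The plan is to take for $\{x_i\}_{i\in I}$ any maximal $R$-separated subset of $T_s$ and show, via a dichotomy coming directly from the definition of $\LCD_{\alpha,\gamma}(A)$, that the balls $B(x_i,r)$ then automatically cover $T_s$. The shell-cardinality estimate \eqref{small shells} will follow from a standard packing argument for $R$-separated points in a Euclidean annulus.

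The main step is the dichotomy that for all $\theta,\theta'\in T_s$ either $|\theta-\theta'|_2 \ge R$ or $|\theta-\theta'|_2 \le r$. To prove it, pick $m,m'\in\mathbb Z^N$ with $|(z/t)A\theta-m|_2\le s$ and $|(z/t)A\theta'-m'|_2\le s$, put $\eta\dfn(z/t)(\theta-\theta')$, and use the triangle inequality to obtain
$$
d_2\bigl(A\eta,\mathbb Z^N\bigr)\le 2s \le \alpha.
$$
If $2s<\gamma|A\eta|_2$, then $d_2(A\eta,\mathbb Z^N)<\min(\gamma|A\eta|_2,\alpha)$, so the definition \eqref{def lcd} of $\LCD_{\alpha,\gamma}(A)$ forces $|\eta|_2\ge\LCD_{\alpha,\gamma}(A)$; combined with the hypothesis $t\ge\sqrt n/\LCD_{\alpha,\gamma}(A)$ this yields $|\theta-\theta'|_2=(t/z)|\eta|_2\ge\sqrt n/z=R$. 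Otherwise $|A\eta|_2\le 2s/\gamma$, and then the hypothesis $|A\eta|_2\ge|\eta|_2$ gives $|\theta-\theta'|_2\le 2st/(\gamma z)=r$, as desired.

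The covering part of \eqref{good covering} is then immediate: for $\theta\in T_s$, maximality of the net produces some $x_i$ with $|\theta-x_i|_2<R$, and the dichotomy upgrades this to $|\theta-x_i|_2\le r$, whence $\theta\in B(x_i,r)$; the $R$-separation is automatic. For \eqref{small shells}, fix $j\ge 1$ and let $I_j$ be the index set in its left-hand side. The balls $B(x_i,R/2)$, $i\in I_j$, are pairwise disjoint by $R$-separation and all lie in the Euclidean annulus with radii $(j-\tfrac12)R$ and $(j+\tfrac32)R$; comparing Lebesgue volumes and estimating the annulus volume by the mean value theorem in terms of its inner radius produces the stated bound $n\,2^n(j+1)^{n-1}$. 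The only genuinely delicate ingredient in this plan is the LCD-based dichotomy; the maximal net construction and the volume comparison in the annulus are both routine.
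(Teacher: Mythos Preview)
Your proposal is correct and follows essentially the same approach as the paper: the paper proves exactly the same dichotomy (with $\tau$ in place of your $\eta$) from the definition of $\LCD_{\alpha,\gamma}(A)$, deduces that $T_s$ is covered by $R$-separated balls of radius $r$, and then bounds $M_j$ by the identical packing argument in the annulus $\{(j-\tfrac12)R\le|y|_2\le(j+\tfrac32)R\}$ via the inequality $(1+x)^n-(1-x)^n\le 2nx(1+x)^{n-1}$. The only place you are more explicit than the paper is in naming the $\{x_i\}$ as a maximal $R$-separated subset of $T_s$ and spelling out how maximality plus the dichotomy yields the covering; the paper simply asserts the covering once the dichotomy is established.
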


\begin{proof}
Let $\theta_1, \theta_2 \in T_s$. By \eqref{def f}, there exists $p_1, p_2 \in \Z^N$ such that
$$
\left| \frac{z}{t} A \theta_1 - p_1\right| \le s \quad \hbox{and} \quad \left| \frac{z}{t} A \theta_2 - p_2\right| \le s. 
$$
By the triangle inequality, 
$$
\left| \frac{z}{t} A \left(\theta_1-\theta_2 \right) - \left(p_1-p_2 \right) \right| \le 2s ,
$$
which means that $d_2 \left(A\tau, \Z^N \right) \le 2s \le \alpha$, where $\tau = z\left(\theta_1 - \theta_2 \right)/t$. By \eqref{def lcd} this implies that either
$$
|\tau|_2 \ge \LCD_{\alpha, \gamma} \left(A \right) ,
$$
or
$$
\alpha \ge 2 s \ge d_2\left(A\tau, \Z^N \right) \ge \min\left( \gamma |A \tau|_2, \alpha \right) = \gamma |A\tau|_2. 
$$ 
By the assumptions that $|A \tau|_2 \ge |\tau|_2$ and $\LCD_{\alpha, \gamma} \left(A \right) \ge \sqrt n / t$, we conclude that
$$
\hbox{either } \quad |\theta_1 - \theta_2|_2 \ge \frac{\sqrt n}{z} = : R \quad \hbox{ or } \quad |\theta_1 - \theta_2|_2 \le \frac{2s t}{\gamma z} = : r. 
$$
Hence, $T_s$ can be covered by a union of Euclidean balls of radius $r$ whose centers are $R$-separated, which proves \eqref{good covering}. Next, for $j\ge 1$, let 
$$
M_j \stackrel{\mathrm{def}}{ = } \card \left( \left\{i \in I : jR \le |x_i|_2 \le (j + 1)R \right\} \right). 
$$
To estimate $M_j$, use a well-known volumetric argument. Indeed, since $\{x_i \}_{i \in I}$ are $R$-separated, we know that the Euclidean balls $B \left(x_i, R/2 \right)$ are disjoint and contained in the shell 
$$
 \left\{y \in \R^n : \left(j-1/2 \right)R \le |y|_2 \le \left(j + 3/2 \right) R\right\}. 
$$
Hence, taking the volume, 
\begin{align*}
M_j \left( \frac{R}{2} \right)^n & \le R^n \left( \left(j + 3/2 \right)^n - \left(j-1/2 \right)^n \right) 
\\ & = R^n \left(j + 1/2 \right)^n \left( \left(1 + \frac{2}{2j + 1} \right)^n - \left(1- \frac{2}{2j + 1} \right)^n \right). 
\end{align*}
Since for every $x \in \left(0,1 \right)$, we have $ \left(1 + x \right)^n - \left(1-x \right)^n \le 2nx \left(1 + x \right)^{n-1}$, we conclude that 
$$
M_j \le n2^n \left(j + 1 \right)^{n-1}. 
$$
This completes the proof.
\end{proof}

Using the covering lemma, we can now prove the required volume estimate. 

\begin{cor} \label{cor measure}
Let $r$ and $R$ be as in Lemma \ref{lem covering}. If $R \ge 2 r$, then
$$
\gamma_n \left(T_s \right) \le \left(\frac{Cr}{R} \right)^n = \left( \frac{2 C t s}{\gamma \sqrt n} \right)^n. 
$$
\end{cor}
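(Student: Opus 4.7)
The plan is to combine the covering from Lemma \ref{lem covering} with a pointwise Gaussian comparison between the small ball $B(x, r)$ and the corresponding larger ball $B(x, R/2)$. Since $|x_i - x_{i'}|_2 \ge R \ge 2r$, the balls $B(x_i, R/2)$ are pairwise disjoint, so $\sum_{i \in I} \gamma_n(B(x_i, R/2)) \le 1$. The task therefore reduces to proving the pointwise estimate
\[
\gamma_n(B(x, r)) \le \left(\frac{2r}{R}\right)^n e^{R^2/8} \gamma_n(B(x, R/2)) \qquad \forall x \in \R^n.
\]

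To establish this comparison, I would symmetrize. The change of variable $y = x + w$, the identity $\tfrac{1}{2}(e^{-|x + w|_2^2/2} + e^{-|x - w|_2^2/2}) = e^{-(|x|_2^2 + |w|_2^2)/2}\cosh(\langle x, w\rangle)$ on the symmetric domain $B(0, r)$, and polar coordinates yield
\[
\gamma_n(B(x, r)) = (2\pi)^{-n/2} e^{-|x|_2^2/2} \int_0^r \rho^{n-1} e^{-\rho^2/2} h_x(\rho)\, d\rho,
\]
where $h_x(\rho) \dfn \int_{\mathbb S^{n-1}} \cosh(\rho \langle x, u\rangle)\, du$ is even and convex in $\rho$, hence nondecreasing on $[0, \infty)$; an analogous expression holds for $\gamma_n(B(x, R/2))$ with upper limit $R/2$. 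Substituting $\rho = (2r/R)\sigma$ in the numerator of $\gamma_n(B(x, r))/\gamma_n(B(x, R/2))$ extracts the scaling factor $(2r/R)^n$ and leaves the remaining ratio
\[
\frac{\int_0^{R/2} \sigma^{n-1} e^{-(2r/R)^2 \sigma^2/2} h_x((2r/R) \sigma)\, d\sigma}{\int_0^{R/2} \sigma^{n-1} e^{-\sigma^2/2} h_x(\sigma)\, d\sigma}.
\]
Using $e^{-(2r/R)^2 \sigma^2/2} \le 1$ and $h_x((2r/R)\sigma) \le h_x(\sigma)$, the numerator's integrand is at most $\sigma^{n-1} h_x(\sigma)$, which is at most $e^{R^2/8}$ times the denominator's integrand since $e^{-\sigma^2/2} \ge e^{-R^2/8}$ on $[0, R/2]$; this bounds the ratio by $e^{R^2/8}$.

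Summing the pointwise bound over $i$ and using disjointness of the $B(x_i, R/2)$ gives $\gamma_n(T_s) \le (2r/R)^n e^{R^2/8}$. Finally, since Corollary \ref{cor measure} will be applied only with $z \ge 1/(2\pi)$ (the restriction inherited from Lemma \ref{integ struc}), we have $R = \sqrt n / z \le 2\pi \sqrt n$ and hence $e^{R^2/8} \le (e^{\pi^2/2})^n$, yielding the bound $(Cr/R)^n$ with $C = 2 e^{\pi^2/2}$; unpacking $r/R = 2st/(\gamma \sqrt n)$ recovers the stated form $(2Cts/(\gamma \sqrt n))^n$.

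The main obstacle is extracting the scaling factor $(2r/R)^n$ without a center-dependent blow-up. A naive comparison of $e^{-|x + (2r/R) w|_2^2/2}$ with $e^{-|x + w|_2^2/2}$ produces terms $e^{(1 - 2r/R)\langle x, w\rangle}$ that are unbounded in $|x|_2$; the $\cosh$-symmetrization eliminates the odd part in $w$ and leaves only an even, $x$-independent integrand ratio controlled by the benign factor $e^{R^2/8}$.
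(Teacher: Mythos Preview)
Your argument is correct and takes a genuinely different route from the paper's. The paper bounds $\gamma_n(B(x_i,r))$ by $(2\pi)^{-n/2}e^{-|x_i|_2^2/8}|B(0,r)|$, groups the centers into the shells $\{jR\le |x_i|_2<(j+1)R\}$, invokes the cardinality bound \eqref{small shells} from Lemma~\ref{lem covering}, and then estimates the resulting series $\sum_{j\ge 1} j^{n-1}e^{-j^2R^2/8}$ by comparison with an integral; only at the very end does it use $R\le 2\pi\sqrt n$. You instead prove the uniform pointwise comparison $\gamma_n(B(x,r))\le (2r/R)^n e^{R^2/8}\gamma_n(B(x,R/2))$ via the $\cosh$-symmetrization, and then use only the $R$-separation of the centers (hence disjointness of the $B(x_i,R/2)$) to sum. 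Your approach is more elementary: it bypasses the shell decomposition entirely and never uses \eqref{small shells}, so in fact the second half of Lemma~\ref{lem covering} becomes unnecessary for Corollary~\ref{cor measure}. The paper's route, on the other hand, makes the Gaussian decay in $|x_i|_2$ explicit and would adapt more readily if one needed finer information about how the covering is distributed in space. Both proofs rely on the contextual bound $z\ge 1/(2\pi)$ (equivalently $R\le 2\pi\sqrt n$) to absorb the Gaussian correction into the constant $C^n$, so your remark that this hypothesis is inherited from Lemma~\ref{integ struc} is exactly how the paper uses it as well.
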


\begin{proof}
Let $y \in \R^n$, we have 
$$
\gamma_n \left( B \left(y,r \right) \right) = \frac{1}{ \left(2\pi \right)^{n/2}} \int_{|y-x|_2 \le r} e^{-\frac{|x|_2^2}{2}} dx. 
$$
Since $|x|_2^2 + |x-y|_2^2 = \frac{1}{2} \left(|y|_2^2 + |2x-y|_2^2 \right) \ge \frac{1}{2} |y|_2^2$,
$$
\gamma_n \left( B \left(y,r \right) \right) \le \frac{1}{ \left(2\pi \right)^{n/2}} e^{- \frac{|y|_2^2}{4}} \int_{|y-x|_2 \le r} e^{\frac{|y-x|_2^2}{2}} dx. 
$$
Therefore, if $|y|_2 \ge R\ge 2r$,
\begin{align} \label{eq:gaussian} 
\nonumber \gamma_n \left( B \left(y,r \right) \right) & \le \frac{1}{ \left(2\pi \right)^{n/2}} \exp \left(- \frac{|y|_2^2}{4} \right) e^{r^2/2} |B \left(0,r \right)| 
\\ & \le \frac{1}{ \left(2\pi \right)^{n/2}} \exp \left( - \frac{|y|_2^2}{8} \right) |B \left(0,r \right)|. 
\end{align}
Assume that $s$ is such that $r \le R/2$, i.e. $4 t s\le \gamma \sqrt n$, then by \eqref{good covering}
$$
\gamma_n \left(T_s \right) \le \sum_{i \in I} \gamma_n \left( B \left(x_i, r \right) \right) \le \sum_{j = 0}^{ \infty} \sum_{i \in I : jR \le |x_i|_2 < \left(j + 1 \right)R} \gamma_n \left( B \left(x_i, r \right) \right). 
$$
Also, for $j \ge 1$, we have by \eqref{small shells}
$$
\card \left( \left\{i \in I :j R \le |x_i|_2 < \left(j + 1 \right)R \right\} \right) \le C^n j^{n-1}. 
$$
By $ \left(\ref{eq:gaussian} \right)$,
$$
\gamma_n \left( B \left(x_i, r \right) \right) \le \frac{1}{ \left(2\pi \right)^{n/2}} \exp \left( - \frac{j^2 R^2}{8} \right) |B (0,r )|. 
$$
Hence,
\begin{align} \label{bound gamma t}
\nonumber\gamma_n \left(T_s \right) & \le \gamma_n \left( B \left(0,r \right) \right) + \sum_{j = 1}^{ \infty} \left(\frac{C^2}{2 \pi} \right)^{n/2} j^{n-1}\exp \left( - \frac{j^2 R^2}{8} \right) |B \left(0,r \right)| \\
& \le \frac{ |B \left(0,r \right)| }{ \left(2\pi \right)^{n/2}} \left(1 + C^n \sum_{j = 1}^{ \infty} j^{n-1} \exp \left( - \frac{j^2 R^2}{8} \right) \right). 
\end{align}
The function $v \mapsto v^{n-1} e^{-v^2 R^2 /8}$ is decreasing for $v \ge 2 \sqrt n / R$. By comparing series with integrals, 
\begin{align*}
\sum_{j = 1}^{\infty} j^{n-1} \exp \left( - \frac{j^2 R^2}{8} \right) & \le \left( \frac{2 \sqrt n}{R} \right)^n + \int_0^{\infty} v^{n-1} e^{-v^2 R^2 /8} dv \\
& = \left( \frac{2 \sqrt n}{R} \right)^n + \frac{8^{n/2}}{R^n} \int_0^{\infty} u^{\frac{n-1}{2} - 1} e^{-u} du \le \left( \frac{C n^{1/2}}{R} \right)^{n}. 
\end{align*}
Since $z \ge 1/ 2\pi$, we have $R \le 2\pi \sqrt n$, so that
$$
\left(1 + C^n \sum_{j = 1}^{\infty} j^{n-1} \exp \left( - \frac{j^2 R^2}{8} \right) \right) \le \left( \frac{C_1 n^{1/2}}{R}\right)^{n}. 
$$
Moreover, it is well-known that $|B \left(0,r \right)| \le C_2^n n^{-n/2} r^n$ which implies by \eqref{bound gamma t} that
$$
\gamma_n \left(T_s \right) \le \left(\frac{Cr}{R} \right)^n = \left( \frac{2 C t s}{\gamma \sqrt n} \right)^n,
$$
which completes the proof.
\end{proof}

We are now in a position to prove Theorem \ref{thm lo}. 

\begin{proof}[Proof of Theorem \ref{thm lo}]
By Lemma \ref{integ struc} and \eqref{int level set}, to have a small ball estimate it is enough to evaluate the integral
$$
\int_0^{\infty}8bs\exp\left(-4bs^2\right)\gamma_n\left(T_s\right)ds. 
$$
We have,
\begin{align*}
&\int_0^{\infty}8bs\exp\left(-4bs^2\right)\gamma_n\left(T_s\right)ds
\\ & \quad \quad = \int_0^{\alpha/2}8bs\exp\left(-4bs^2\right)\gamma_n\left(T_s\right)ds + \int_{\alpha/2}^{\infty}8bs\exp\left(-4bs^2\right)\gamma_n\left(T_s\right)ds
\\ & \quad \quad \le \int_0^{\alpha/2}8bs\exp\left(-4bs^2\right)\gamma_n\left(T_s\right)ds + \exp\left(-b\alpha^2\right). 
\end{align*}
Assume first that $\alpha \le \frac{\gamma \sqrt n}{2t}$ so that for any $s \le \alpha/2$ we have $R \ge 2r$. By Corollary \ref{cor measure},
$$
\gamma_n \left(T_s\right) \le \left( \frac{2Cts}{\gamma \sqrt n} \right)^n ,
$$
and so
\begin{align*}
\int_0^{\alpha/2} 8bs \exp \left(-4bs^2 \right) \gamma_n \left(T_s\right) ds & \le\int_0^{\alpha/2} 8bs \exp \left(-4bs^2 \right) \left( \frac{2 C t s}{\gamma \sqrt n} \right)^n ds \\
& \le 8b \left(\frac{2Ct}{\gamma \sqrt n} \right)^n \int_0^{ \infty} s^{n + 1} e^{-4bs^2} ds \\
& = \left(\frac{Ct}{\gamma \sqrt b \sqrt n} \right)^n \int_0^{ \infty} u^{n/2} e^{-u} du \\
& \le \left(\frac{C' t}{\gamma \sqrt b} \right)^n. 
\end{align*}
Assume otherwise that $\alpha \ge  \frac{\gamma \sqrt n}{2t} \dfn \alpha_0$. Then, as before,
$$
\int_0^{\infty} 8bs \exp \left(-4bs^2 \right) \gamma_n \left(T_s\right) ds \le \int_0^{\alpha_0/2} 8bs \exp \left(-4bs^2 \right) \gamma_n \left(T_s\right) ds + \exp \left(-b \alpha_0^2 \right). 
$$
For $s \le \alpha_0/2$ we do exactly the same computation as in the first case and obtain
$$
\int_0^{\infty} 8bs \exp \left(-4bs^2 \right) \gamma_n \left(T_s\right) ds \le \left(\frac{C' t}{\gamma \sqrt b} \right)^n + \exp \left(-b \alpha_0^2 \right). 
$$
In this case, we also have 
$$
\exp \left(-b \alpha_0^2 \right) = \exp \left(-\frac{b \gamma^2 n}{4 t^2} \right) \le \left(\frac{C t}{\gamma \sqrt b} \right)^n. 
$$
This concludes the fact that 
$$
\int_{\R^n} \exp \left( -4b f \left(\theta \right)^2 \right)\exp \left( -|\theta|_2^2 /2 \right) d\theta \le \left(\frac{C t}{\gamma \sqrt b} \right)^n + \exp \left(-b \alpha^2 \right). 
$$
Using Lemma \ref{integ struc}, Theorem \ref{thm lo} follows. 
\end{proof}

\bibliographystyle{amsalpha}
\bibliography{bib-file}

\newcommand{\etalchar}[1]{$^{#1}$}
\def\cprime{$'$} \def\cprime{$'$} \def\cprime{$'$} \def\cprime{$'$}
\providecommand{\bysame}{\leavevmode\hbox to3em{\hrulefill}\thinspace}
\providecommand{\MR}{\relax\ifhmode\unskip\space\fi MR }
\providecommand{\MRhref}[2]{%
  \href{http://www.ams.org/mathscinet-getitem?mr=#1}{#2}
}
\providecommand{\href}[2]{#2}
\begin{thebibliography}{AGL{\etalchar{+}}12}

\bibitem[AGL{\etalchar{+}}12]{AGLLOPT12}
R.~Adamczak, O.~Gu{\'e}don, R.~Lata{\l}a, A.~E. Litvak, C.~Oleszkiewicz,
  A.~Pajor, and N.~Tomczak-Jaegermann, \emph{Moment estimates for convex
  measures}, Electron. J. Probab. \textbf{17} (2012), 1--19.

\bibitem[Bre11]{Bre11}
H.~Brezis, \emph{Functional analysis, {S}obolev spaces and partial differential
  equations}, Universitext, Springer, New York, 2011.

\bibitem[FG11]{FG11}
O.~Friedland and O.~Gu{\'e}don, \emph{Random embedding of {$\ell^n_p$} into
  {$\ell^N_r$}}, Math. Ann. \textbf{350} (2011), no.~4, 953--972.

\bibitem[FS07]{FS07}
O.~Friedland and S.~Sodin, \emph{Bounds on the concentration function in terms
  of the {D}iophantine approximation}, C. R. Math. Acad. Sci. Paris
  \textbf{345} (2007), no.~9, 513--518.

\bibitem[GM04]{GM04}
E.~Gluskin and V.~Milman, \emph{Geometric probability and random cotype 2},
  Geometric aspects of functional analysis, Lecture Notes in Math., vol. 1850,
  Springer, Berlin, 2004, pp.~123--138.

\bibitem[LL15]{LL15}
Ming-Jun Lai and Yang Liu, \emph{The probabilistic estimates on the largest and
  smallest {$q$}-singular values of random matrices}, Math. Comp. \textbf{84}
  (2015), no.~294, 1775--1794.

\bibitem[LO05]{LO05}
R.~Lata{\l}a and K.~Oleszkiewicz, \emph{Small ball probability estimates in
  terms of widths}, Studia Math. \textbf{169} (2005), no.~3, 305--314.

\bibitem[LS01]{LS01}
W.~V. Li and Q.-M. Shao, \emph{Gaussian processes: inequalities, small ball
  probabilities and applications}, Stochastic processes: theory and methods,
  Handbook of Statist., vol.~19, North-Holland, Amsterdam, 2001, pp.~533--597.

\bibitem[Ngu12]{Ngu121}
H.~H. Nguyen, \emph{Inverse {L}ittlewood-{O}fford problems and the singularity
  of random symmetric matrices}, Duke Math. J. \textbf{161} (2012), no.~4,
  545--586.

\bibitem[NV13]{NV13}
H.~H Nguyen and V.~Vu, \emph{Small ball probability, inverse theorems, and
  applications}, Preprint available at \url{http://arxiv.org/abs/1301.0019},
  2013.

\bibitem[Pao12]{Pao12}
G.~Paouris, \emph{Small ball probability estimates for log-concave measures},
  Trans. Amer. Math. Soc. \textbf{364} (2012), no.~1, 287--308.

\bibitem[PP13]{PP13}
G.~Paouris and P.~Pivovarov, \emph{Small-ball probabilities for the volume of
  random convex sets}, Discrete Comput. Geom. \textbf{49} (2013), no.~3,
  601--646.

\bibitem[RV08]{RV08}
M.~Rudelson and R.~Vershynin, \emph{The {L}ittlewood-{O}fford problem and
  invertibility of random matrices}, Adv. Math. \textbf{218} (2008), no.~2,
  600--633.

\bibitem[RV09]{RV09}
\bysame, \emph{Smallest singular value of a random rectangular matrix}, Comm.
  Pure Appl. Math. \textbf{62} (2009), no.~12, 1707--1739.

\bibitem[TV09a]{TV092}
T.~Tao and V.~H. Vu, \emph{From the {L}ittlewood-{O}fford problem to the
  circular law: universality of the spectral distribution of random matrices},
  Bull. Amer. Math. Soc. (N.S.) \textbf{46} (2009), no.~3, 377--396.

\bibitem[TV09b]{TV091}
\bysame, \emph{Inverse {L}ittlewood-{O}fford theorems and the condition number
  of random discrete matrices}, Ann. of Math. (2) \textbf{169} (2009), no.~2,
  595--632.

\bibitem[TV12]{TV12}
\bysame, \emph{The {L}ittlewood-{O}fford problem in high dimensions and a
  conjecture of {F}rankl and {F}\"uredi}, Combinatorica \textbf{32} (2012),
  no.~3, 363--372.

\end{thebibliography}

\end{document}